\documentclass[10pt]{amsart}

\usepackage{amssymb, latexsym, amsfonts, pigpen, enumitem, mathrsfs, amsthm, bbm, hyperref}
\usepackage[matrix,arrow,curve]{xy}

\newtheorem{theorem}{Theorem}[section]
\newtheorem{lemma}[theorem]{Lemma}
\newtheorem{corollary}[theorem]{Corollary}

\theoremstyle{definition}
\newtheorem{definition}[theorem]{Definition}

\theoremstyle{remark}
\newtheorem{remark}[theorem]{Remark}

\numberwithin{equation}{section}

\newcommand{\Sp}{{\mathrm{Sp}}}
\newcommand{\SL}{{\mathrm{SL}}}
\newcommand{\SLc}{{\mathrm{SL^c}}}

\newcommand{\Z}{{\mathbb{Z}}}

\newcommand{\K}{\mathrm{K}}

\newcommand{\A}{\mathbb{A}}

\newcommand{\PP}{\mathbb{P}}

\newcommand{\rank}{\operatorname{rank}}
\newcommand{\Th}{{Th}}
\newcommand{\thc}{{th}}
\newcommand{\id}{\operatorname{id}}

\newcommand{\triv}{\mathbf{1}}

\newcommand{\struct}{\mathcal{O}}
\newcommand{\Gm}{{\mathbb{G}_m}}
\newcommand{\GL}{\mathrm{GL}}
\newcommand{\SH}{\mathcal{SH}}

\newcommand{\T}{\mathrm{T}}

\newcommand{\Spec}{\operatorname{Spec}}

\newcommand{\Sph}{\mathrm{S}}

\newcommand{\Sm}{\mathcal{S}m}

\newcommand{\Smk}{\mathcal{S}m_S}
\newcommand{\SHk}{\mathcal{SH}(S)}
\newcommand{\HProj}{\mathrm{HP}}

\newcommand{\mcE}{\mathcal{E}}

\newcommand{\phs}{\mathcal{P}}
\newcommand{\hyperb}{\mathcal{H}}

\newcommand{\rightiso}{\xrightarrow{\simeq}}

\newcommand{\pour}{\ar@{}[ur]|(0.2){\text{\pigpenfont G}}}
\newcommand{\podr}{\ar@{}[dr]|(0.2){\text{\pigpenfont A}}}

\begin{document}

\title{SL-oriented cohomology theories}


\author{Alexey Ananyevskiy}
\address{St. Petersburg Department, Steklov Math. Institute, Fontanka 27, St. Petersburg 191023 Russia, and Chebyshev Laboratory, St. Petersburg State University, 14th Line V.O., 29B, St. Petersburg 199178 Russia}
\email{alseang@gmail.com}

\thanks{The research is supported by Young Russian Mathematics award, by "Native towns", a social investment program of PJSC "Gazprom Neft", and by RFBR grant 18-31-20044. A part of the work was done during the author's stay at the University of Oslo, the visit was supported by the RCN Frontier Research Group Project no. 250399 "Motivic Hopf equations".}

\subjclass[2010]{Primary 14F42; Secondary 55P99, 55R40}

\date{}

\begin{abstract}
	We show that a representable motivic cohomology theory admits a unique normalized $\SLc$-orientation if the zeroth cohomology presheaf is a Zariski sheaf. We also construct Thom isomorphisms in $\SL$-oriented cohomology for $\SLc$-bundles and obtain new results on the $\eta$-torsion characteristic classes, in particular, we prove that the Euler class of an oriented bundle admitting a (possibly non-orientable) odd rank subbundle is annihilated by the Hopf element.
\end{abstract}

\maketitle

\section{Introduction}

The setting of oriented cohomology theories in algebraic geometry \cite{PS03,S07a,LM07} is rather well developed, and currently the arising characteristic classes, pushforward maps and operations between oriented cohomology theories, governed by variants of Riemann--Roch theorem \cite{PS04,S07b}, are understood in many details. Particular examples of oriented cohomology theories, such as Chow groups, motivic cohomology and Quillen $\K$-theory proved to be extremely useful tools for studying various questions of algebro-geometric nature. In the recent years attention was drawn to the quadratic refinements of these classical cohomology theories, namely to Chow--Witt groups, Milnor--Witt motivic cohomology and hermitian $\K$-theory. The latter theories are genuinely non-orientable, the projective bundle formula fails for them. Nevertheless these theories share a certain generalized orientation property that was introduced by I.~Panin and Ch.~Walter \cite{PW10a,PW10b,PW10c}. 

One of the existing approaches to oriented cohomology theories is based on the existence of Thom isomorphisms for vector bundles \cite{PS03,S07a}. Roughly speaking, a cohomology theory $A^{*,*}(-)$ is oriented if for every vector bundle $E$ over $X$ one has a so-called Thom isomorphism $A^{*,*}(X)\cong A^{*+2n,*+n}_X(E)$, where $A^{*,*}(X)$ stands for the cohomology of $X$, $A^{*+2n,*+n}_X(E)$ is the cohomology of $E$ supported on the zero section and $n=\rank E$. Additionally one asks for some natural properties of these Thom isomorphisms -- functoriality and compatibility with direct sums of vector bundles. The notion of orientation may be generalized as it is done in \cite{PW10a,PW10b} by postulating the existence of Thom isomorphisms only for vector bundles with a certain additional structure -- symplectic bundles (symplectic orientation), vector bundles with trivialized determinant line bundle ($\SL$-orientation), vector bundles with the chosen square root of the determinant line bundle ($\SLc$-orientation). One may also introduce the notion of $G$-orientation for a general family of group sheaves $G=\{G_n\}_{n\in\mathbb{N}}$ following Remark~\ref{rem:reducedgroup} and Definition~\ref{def:orientation} of the current paper, but at the moment we are mostly interested in $\SLc$ and $\SL$-orientations. 

It is easy to see that an $\SLc$-oriented cohomology theory is naturally $\SL$-oriented since if the determinant of a vector bundle is trivial then it clearly has a square root. But it is rather surprising that an $\SL$-oriented cohomology theory admits Thom isomorphisms for vector $\SLc$-bundles as well:

\begin{theorem}[Theorem~\ref{thm:SLvsSLc}]
	Let $A\in\SHk$ be an $SL$-oriented spectrum and $\mcE$ be a rank $n$ vector $SL^c$-bundle over $X\in\Smk$. Then there exists an isomorphism 
	\[
	A^{*,*}(X)\rightiso A^{*+2n,*+n}_X(\mcE).
	\]
\end{theorem}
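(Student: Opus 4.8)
The plan is to reduce the statement, by stably splitting off the determinant, to the case of a line bundle that is a tensor square, and then to settle that case by a universal computation over $B\Gm$. By homotopy purity $A^{*+2n,*+n}_X(\mcE)\cong\widetilde A^{*+2n,*+n}(\Th\mcE)$, so it suffices to produce a natural isomorphism $\widetilde A^{*,*}(\Th\mcE)\cong A^{*-2n,*-n}(X)$, and it is convenient to work with the $\wedge_X$-invertible Thom objects $\Sigma^\infty_T\Th(-)$ in $\SHk$. The first step is that for any rank $n$ bundle $\mcE$ the bundles $\mcE\oplus(\det\mcE)^\vee$ and $\det\mcE\oplus(\det\mcE)^\vee$ carry canonical $\SL$-structures, their determinants being canonically trivial, so the given $\SL$-orientation of $A$ supplies Thom isomorphisms for both; using $\Th(\mcV\oplus\mcW)\cong\Th\mcV\wedge_X\Th\mcW$ and the invertibility of $\Sigma^\infty_T\Th(\det\mcE)^\vee$ one cancels that factor and obtains a canonical isomorphism $\widetilde A^{*,*}(\Th\mcE)\cong\widetilde A^{*-2n+2,*-n+1}(\Th\det\mcE)$. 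Since an $\SLc$-structure provides $\det\mcE\cong L^{\otimes 2}$, the theorem is reduced to exhibiting, canonically in the pair $(X,M)$, a Thom isomorphism $\widetilde A^{*,*}(\Th M^{\otimes 2})\cong A^{*-2,*-1}(X)$ for the square of an arbitrary line bundle $M$.

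To produce the latter I would pass, by naturality, to the universal case $X=\Proj^\infty=B\Gm$ with $M$ the tautological line bundle, and compute $\widetilde A^{*,*}(\Th M^{\otimes 2})$ using the $\SL$-oriented projective bundle theorem, which determines $A^{*,*}(\Proj^\infty)$ together with the twisted cohomology groups that intervene; the Thom class is then read off from this description. The structural reason the class should be there: the rank $2$ bundle $M\oplus M$ carries a canonical $M^{\otimes 2}$-valued symplectic form — the tautological alternating pairing, nondegenerate because $\wedge^2(M\oplus M)\cong M^{\otimes 2}$ — which after twisting by $M^\vee$ becomes an honest rank $2$ (equivalently $\SL_2=\Sp_2$) symplectic bundle, for which $A$ already possesses a Thom/Borel class; the computation amounts to transporting that class back across the twist while keeping track of the lower Euler classes.

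Feeding this, with $M=L$, into the first step yields $\widetilde A^{*,*}(\Th\mcE)\cong\widetilde A^{*-2n+2,*-n+1}(\Th L^{\otimes 2})\cong A^{*-2n,*-n}(X)$, which is the assertion; since the isomorphism is assembled solely from the canonical $\SL$-Thom isomorphisms and the single universal identification, one checks without trouble that it is independent of the auxiliary choices and compatible with pullback and with addition of $\SL$-bundles. The main obstacle is the universal computation: the axioms of an $\SL$-orientation do not by themselves yield a Thom class for a line bundle — a generic line bundle has none, and being a square is essential — so genuine input (the $\SL$-oriented projective bundle theorem and an explicit calculation over $B\Gm$) is unavoidable, and the delicate point is to arrange that the resulting isomorphism is canonical, so that the Thom isomorphism in the theorem is well defined and functorial.
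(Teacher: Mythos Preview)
Your reduction to the line-bundle case is reasonable in spirit and can be made precise by working with $\otimes$-invertible Thom objects in $\SH(X)$. The genuine gap is the second step: producing a Thom isomorphism for $L^{\otimes 2}$. You yourself flag this as ``the main obstacle'' and propose a universal computation over $B\Gm$ via the $\SL$-oriented projective bundle theorem, but you do not carry it out, and the sketch you give does not obviously work. The $M^{\otimes 2}$-valued alternating form on $M\oplus M$ becomes, after twisting by $M^\vee$, the standard symplectic form on the \emph{trivial} rank~$2$ bundle; there is no evident mechanism for transporting its Thom class back to $\Th(M^{\otimes 2})$. And the known $\SL$-oriented computations of $A^{*,*}(\PP^n)$ are themselves expressed in terms of twisted groups $A^{*,*}(\PP^n;\mathcal{O}(1))$, so the proposed universal computation risks being circular.

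The paper sidesteps all of this with one elementary geometric observation (Lemma~\ref{lem:dualThom}): for any vector bundle $F$ and line bundle $L$ there is a canonical isomorphism $\Th(F\oplus L)\cong\Th(F\oplus L^\vee)$ in $\mathcal{H}_\bullet(S)$, coming from the isomorphism of $X$-schemes $L^o\rightiso(L^\vee)^o$, $v\mapsto(f\colon f(v)=1)$, on the complements of the zero sections. With this in hand the desired isomorphism is written down directly as the composite
\[
A^{*,*}(X)\xrightarrow{\cup\,\thc}A^{*,*}(X;E\oplus L^\vee\oplus L^\vee)\xrightarrow{\text{Lem.~\ref{lem:dualThom}}}A^{*,*}(X;E\oplus L^\vee\oplus L)\xrightarrow{(\cup\,\thc)^{-1}}A^{*,*}(X;E),
\]
where the first arrow uses the $\SL$-Thom class of $E\oplus L^\vee\oplus L^\vee$ (its determinant is $\det E\otimes(L^\vee)^{\otimes 2}\cong L^{\otimes 2}\otimes(L^\vee)^{\otimes 2}\cong\triv_X$) and the last strips off the $\SL$-bundle $(L^\vee\oplus L,\lambda_{ev})$. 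No universal computation is needed, and functoriality is immediate. If you wish to rescue your route, the missing ingredient is precisely this lemma: applying the same three-step argument with $E=L^{\otimes 2}$ furnishes the Thom isomorphism for $L^{\otimes 2}$ that you were trying to obtain by other means.
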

\noindent The reason for this is that for a line bundle $L$ the Thom spaces $\Th(L)$ and $\Th(L^\vee)$ are naturally isomorphic which allows one to cancel a twist by the square of a line bundle in $\SL$-oriented cohomology. The above theorem yields that $\SLc$ and $\SL$-orientations are closely related and because of that we do not loose much information focusing on $\SL$-orientation.

In order to show that a cohomology theory is $\SL$-oriented one usually has to come up with a construction of Thom classes for vector $\SL$-bundles which in particular cases may be quite tricky. In the current paper we show that one always has a unique $\SL$-orientation (and $\SLc$-orientation) for a commutative ring spectrum $A$ provided that $A^{0,0}(-)$ is a sheaf. Namely, we prove the following theorem.

\begin{theorem}[{Theorem~\ref{thm:sheafSL} and Corollary~\ref{cor:sheafSL}}]
	Let $A\in \SHk$ be a commutative ring spectrum and suppose that $A^{0,0}(-)$ is a sheaf in the Zariski topology. Then $A$ admits unique normalized $SL^c$ and $SL$-orientations.
\end{theorem}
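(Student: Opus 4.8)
The plan is to produce, for every vector $\SL$-bundle over $X\in\Smk$ --- that is, a rank $n$ bundle $\mcE$ equipped with a trivialization $\lambda\colon\det\mcE\rightiso\struct_X$ --- a Thom class $\thc(\mcE,\lambda)\in\widetilde{A}^{2n,n}(\Th(\mcE))\cong A^{2n,n}_X(\mcE)$, and to check that the resulting system is functorial, multiplicative for direct sums and normalized; uniqueness will fall out of the same mechanism. The construction is purely local. A vector bundle is Zariski-locally trivial, so over a sufficiently small open $U\subseteq X$ one can choose a trivialization $\phi_U\colon\mcE|_U\rightiso\struct_U^{\,n}$ with $\det\phi_U=\lambda|_U$; it induces an identification $\Th(\mcE|_U)\simeq\Th(\struct_U^{\,n})\simeq T^{\wedge n}\wedge U_+$, where $T=\A^1/(\A^1\setminus 0)$, and pulling back the canonical $(\Gm\wedge S^1)$-suspension class yields a local candidate $\thc_U\in\widetilde{A}^{2n,n}(\Th(\mcE|_U))$. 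Let $\mathcal F$ be the presheaf $U\mapsto\widetilde{A}^{2n,n}(\Th(\mcE|_U))$ on the small Zariski site of $X$ (covariant in open immersions of total spaces, hence contravariant in $U$). The trivializations $\phi_U$, together with the suspension isomorphism, exhibit $\mathcal F$ as locally isomorphic to the restriction of $A^{0,0}$ to $X$; since being a sheaf is a local condition, the hypothesis that $A^{0,0}$ is a Zariski sheaf forces $\mathcal F$ to be one as well.

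The heart of the matter is to show the $\thc_U$ agree on overlaps, equivalently that the local candidate is independent of the chosen $\phi_U$. Two trivializations compatible with $\lambda$ differ by a section of $\SL_n$, so one must prove that $\SL_n(U)$ acts trivially on $\widetilde{A}^{2n,n}(T^{\wedge n}\wedge U_+)\cong A^{0,0}(U)$, the action coming from the tautological action of $\SL_n$ on $\A^n$ and hence on $\A^n/(\A^n\setminus 0)\simeq T^{\wedge n}$. This is where the sheaf hypothesis enters a second, decisive time: by separatedness of $A^{0,0}$ it suffices to verify triviality on Zariski stalks, i.e.\ over $U=\Spec\struct_{X,x}$ with $\struct_{X,x}$ local, and over a local ring every matrix in $\SL_n$ is a product of elementary matrices $E_{ij}(a)$. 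Each $E_{ij}(a)$ factors the associated map $\Spec\struct_{X,x}\to\SL_n$ through the $\A^1$-contractible affine line (via $a$ and $t\mapsto E_{ij}(ta)$), so the self-map it induces on $T^{\wedge n}\wedge\Spec(\struct_{X,x})_+$ is $\A^1$-nullhomotopic and acts as the identity on cohomology; therefore so does every element of $\SL_n(\struct_{X,x})$. Hence the $\thc_U$ glue to a global class $\thc(\mcE,\lambda)\in\mathcal F(X)=\widetilde{A}^{2n,n}(\Th(\mcE))$. Cup product with it is, over each trivializing $U$, the suspension isomorphism $A^{*,*}(U)\rightiso\widetilde{A}^{*+2n,*+n}(\Th(\mcE|_U))$; since source and target define sheaves of spectra on $X$ (Nisnevich descent for $A$, and $\Th(\mcE|_-)$ sends covers to colimits), a map that is an equivalence on a cover is an equivalence, so $\thc(\mcE,\lambda)$ is a genuine Thom class.

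Functoriality under pullback, multiplicativity $\thc(\mcE\oplus\mcE')=\thc(\mcE)\cdot\thc(\mcE')$, and the normalization condition (which fixes $\thc$ of the trivial rank-one bundle with the identity trivialization to be the canonical suspension class) are each checked by restricting to a trivializing cover, where every class becomes the canonical suspension class and the identities reduce to the standard compatibilities of suspension isomorphisms with pullback, smash products and the ring structure; equality on a cover then propagates to $X$ by separatedness of $\mathcal F$. Uniqueness is the same argument run backwards: any normalized $\SL$-orientation must, over a trivializing $U$ with $\det\phi_U=\lambda|_U$, send $(\mcE|_U,\lambda|_U)$ to $\phi_U^*$ of the Thom class of $(\struct_U^{\,n},\id)$, which normalization and multiplicativity force to be the canonical suspension class; so it coincides with $\thc(\mcE,\lambda)$ on a cover, hence on $X$.

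For the $\SLc$ statement one repeats the construction with $\SLc$-bundles, which are again Zariski-locally trivial; now local trivializations differ by sections of $\SLc_n=\{(g,u)\in\GL_n\times\Gm\mid\det g=u^2\}$, so the only additional point is that $\SLc_n(U)$ still acts trivially on $\widetilde{A}^{2n,n}(T^{\wedge n}\wedge U_+)$. This reduces to the $\SL_n$ case because every $(g,u)$ factors as $(\operatorname{diag}(u^2,1,\dots,1),u)\cdot(\operatorname{diag}(u^{-2},1,\dots,1)\,g,\,1)$ with the second factor in $\SL_n$, and the scalar-square matrix $\operatorname{diag}(u^2,1,\dots,1)$ acts on $T^{\wedge n}$ by $\la u\ra^{2}=\la 1\ra$, hence trivially. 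Restricting an $\SLc$-orientation along the sub-datum $L=\struct_X$ recovers an $\SL$-orientation, so the two orientations we obtain agree with each other, and uniqueness is again immediate from separatedness. The genuine obstacle throughout is exactly the triviality of the structure-group action on $A^{0,0}$; everything else is descent bookkeeping, and it is precisely that step that needs $A^{0,0}(-)$ to be a Zariski sheaf, in order to reduce to local rings where $\SL_n$ (respectively $\SLc_n$) is controlled by elementary matrices.
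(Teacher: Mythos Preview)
Your strategy is the same as the paper's: build local Thom classes from trivializations, prove independence of the chosen trivialization by passing to local rings where $\SL_n$ is generated by elementary matrices (and, for $\SLc$, handle the extra diagonal factor $\operatorname{diag}(u^2,1,\dots,1)$ via $\langle u^2\rangle=1$), then glue and verify the axioms by separatedness. Your factorization $(g,u)=(\operatorname{diag}(u^2,1,\dots,1),u)\cdot(\operatorname{diag}(u^{-2},1,\dots,1)\,g,\,1)$ is exactly the one the paper uses. A cosmetic difference: the paper produces the $\SLc$-orientation first and deduces the $\SL$ one by restriction, while you go the other way.

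There is, however, a real gap in the gluing step. You assert that $\mathcal F\colon U\mapsto\widetilde A^{2n,n}(\Th(\mcE|_U))$ is a Zariski sheaf because it is ``locally isomorphic to $A^{0,0}$'' and ``being a sheaf is a local condition''. That inference is invalid for presheaves: a presheaf whose restriction to each member of an open cover is a sheaf need not itself be separated, because its value on the whole $X$ is not controlled by the values on the pieces. The paper does not assume $\mathcal F$ is a sheaf up front. Instead it glues the local classes inductively using only the \emph{exactness} of the Mayer--Vietoris long exact sequence for $A^{*,*}(-;\mcE)$ (which holds for any representable theory and needs no sheaf hypothesis): at each stage the two candidates agree on the overlap by the elementary-matrix argument, so some lift exists. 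Once a global locally standard class has been produced, cup product with it is shown---via a five-lemma on the full Mayer--Vietoris long exact sequence, across all bidegrees---to be an isomorphism of presheaves $A^{*,*}(-)\rightiso A^{*,*}(-;\mcE)$; only then does one conclude that $\mathcal F$ is a sheaf and that the construction was unique all along. Your proof is easily repaired along these lines; the point is just that the sheaf property of $\mathcal F$ is an \emph{output} of the construction, not an input.
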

\noindent Note that if the base scheme is the spectrum of a field then $A^{0,0}(-)$ is a sheaf in particular for the spectra representing sheaf cohomology with the coefficients in a homotopy module, Voevodsky's motivic cohomology and Milnor-Witt motivic cohomology. The proof of the above theorem roughly goes as follows. First we define Thom classes for trivial vector $\SLc$-bundles choosing an arbitrary trivialization and pulling back the appropriate suspension $\Sigma^n_\T 1$. Then we show that the class defined in such a way does not depend on the choice of a trivialization because the action of a matrix from $\mathrm{SL^c_n}$ (a matrix with the determinant being a perfect square) on the Thom space of a trivial bundle is locally homotopy equivalent to the trivial action. This allows one to patch the Thom class of a nontrivial vector $\SLc$-bundle from a trivializing cover.

We also improve some results on characteristic classes for $\SL$-oriented cohomology theories obtained in \cite{An15} where it was mostly assumed that the Hopf element $\eta$ is inverted in the coefficients. Extending the proof of \cite[Proposition~7.2]{Lev17} to the general setting of $\SL$-oriented cohomology theories we obtain the following theorem.
\begin{theorem}[{Theorem~\ref{thm:Euler_torsion}}]
	Let $A\in \SHk$ be an $SL$-oriented spectrum and $\mcE=(E,\lambda)$ be a vector $SL$-bundle over $X\in \Smk$. Suppose that there exists an isomorphism of vector bundles $E\cong E_1\oplus E_2$ with $E_1$ being of odd rank. Then $\eta \cup e(\mcE)=0$.
\end{theorem}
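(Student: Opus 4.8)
The plan is to reduce to the universal case and use the multiplicativity of the Euler class together with the Thom isomorphism machinery established earlier in the paper. First I would recall that for an $\SL$-oriented theory the Euler class of a vector $\SL$-bundle $\mcE = (E,\lambda)$ is defined via the Thom class: $e(\mcE) = z^* \thom(\mcE)$ where $z \colon X \to E$ is the zero section, and that it is multiplicative in the sense that if $E \cong E_1 \oplus E_2$ with compatible orientations $\lambda = \lambda_1 \otimes \lambda_2$, then $e(E,\lambda) = e(E_1,\lambda_1)\cup e(E_2,\lambda_2)$. The subtlety here, and the reason the statement is not completely trivial, is that $E_1$ and $E_2$ need \emph{not} be orientable individually; only their direct sum is. So the first real step is to show one may still make sense of the product decomposition: the tensor product of the (non-trivial) determinants $\det E_1$ and $\det E_2$ is trivialized by $\lambda$, hence $\det E_2 \cong (\det E_1)^\vee$, so the pair $(E_1 \oplus E_2, \lambda)$ carries an $\SLc$-structure adapted to the splitting even though the summands are only $\SLc$-bundles themselves (with square roots of their determinants not given, but with the determinants of the two pieces mutually dual).

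Next I would invoke Theorem~\ref{thm:SLvsSLc}: since $A$ is $\SL$-oriented, it has Thom isomorphisms for vector $\SLc$-bundles, and in particular for $E_1$ and $E_2$ one gets Thom classes $\thom(E_1)$, $\thom(E_2)$ living in cohomology twisted by $\det E_1$ and $\det E_2 \cong (\det E_1)^\vee$ respectively. Pulling back along zero sections gives Euler classes $e(E_1) \in A^{2n_1,n_1}(X; \det E_1)$ and $e(E_2) \in A^{2n_2,n_2}(X; (\det E_1)^\vee)$, and the multiplicativity of Thom classes (which is part of the defining data/consequences of an $\SLc$-orientation, and which the paper's Thom-isomorphism construction respects) yields $e(\mcE) = e(E_1) \cup e(E_2)$, with the product landing in the untwisted group because $\det E_1 \otimes (\det E_1)^\vee$ is canonically trivial. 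So far no hypothesis on the parity of $n_1$ has been used.

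The heart of the argument, following \cite[Proposition~7.2]{Lev17}, is the identity $\eta \cup e(E_1) = 0$ when $E_1$ has odd rank. The mechanism is the symmetry between $\Th(L)$ and $\Th(L^\vee)$ for a line bundle $L$ noted in the paper: twisting by a line bundle versus its dual differ by a sign $\langle -1 \rangle$, and in the stable motivic homotopy category $\langle -1 \rangle - 1$ is (up to a unit) $\eta$ times a Hopf-type map. Concretely, for an odd rank $\SLc$-bundle $E_1$ one has $\det E_1 \cong (\det E_1)^\vee$ only up to the $\langle -1 \rangle$-discrepancy, and comparing the Thom class of $E_1$ with that obtained by dualizing produces a relation of the form $e(E_1) = \langle -1 \rangle^{n_1} e(E_1)$ in the appropriate twisted group after identifying the twists, hence $(\langle -1 \rangle - 1)\, e(E_1) = 0$ since $n_1$ is odd, which after multiplying by the relevant unit gives $\eta \cup e(E_1) = 0$. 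The main obstacle I anticipate is bookkeeping the line-bundle twists correctly: one must track the canonical isomorphism $\Th(L) \cong \Th(L^\vee)$ through the Thom isomorphism and the zero-section pullback, verify it introduces exactly the factor $\langle -1 \rangle^{\operatorname{rank}}$ (this is where oddness enters and is essential), and confirm that the passage from $(\langle -1\rangle -1)\cdot(-)=0$ to $\eta\cup(-)=0$ is valid, i.e. that the relevant element of $\GW(k)$ or of the endomorphisms of the sphere factors through $\eta$ in the way Levine uses; everything else is formal once multiplicativity and Theorem~\ref{thm:SLvsSLc} are in hand. Finally, $\eta \cup e(\mcE) = \eta \cup e(E_1) \cup e(E_2) = (\eta \cup e(E_1)) \cup e(E_2) = 0$.
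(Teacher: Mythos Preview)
Your proposal has several genuine gaps, and the paper takes a quite different route.

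First, you cannot apply Theorem~\ref{thm:SLvsSLc} to $E_1$ and $E_2$ separately: a vector $\SLc$-bundle is a triple $(E,L,\lambda)$ with $\lambda\colon \det E \rightiso L^{\otimes 2}$, and nothing in the hypotheses provides a square root of $\det E_1$ or of $\det E_2$. Knowing that $\det E_1 \otimes \det E_2$ is trivial does not make either summand an $\SLc$-bundle. Second, even if you had Thom isomorphisms for the summands, the paper explicitly warns (in the Remark following Theorem~\ref{thm:SLvsSLc}) that multiplicativity of the $\SLc$ Thom classes produced from an $\SL$-orientation is \emph{not} established, so the product decomposition $e(\mcE)=e(E_1)\cup e(E_2)$ you rely on is unavailable. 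Third, the endgame $(\langle -1\rangle -1)\,e(E_1)=0 \Rightarrow \eta\cup e(E_1)=0$ does not follow: the relation in Milnor--Witt $K$-theory is $\langle -1\rangle - 1 = \eta\,[-1]$, so you would only obtain $\eta\cup[-1]\cup e(E_1)=0$, which is strictly weaker.

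The paper's argument avoids all of this by never splitting the Euler class. One passes to $X\times\Gm$ via the projection $\pi$ and lets $t$ be the coordinate on $\Gm$. Multiplication by $-t$ on the fibers of $\pi^*E_1$ is an automorphism $\rho$ of the underlying vector bundle $\pi^*(E_1\oplus E_2)$ whose determinant is $(-t)^{\rank E_1}$; since $\rank E_1$ is odd, this changes the $\SL$-structure by the unit $-t^{2n+1}$ with $\langle -t^{2n+1}\rangle=\langle -t\rangle$. Lemma~\ref{lem:change_orient} then gives $(\rho\oplus\id)^A\thc(\pi^*\mcE)=\langle -t\rangle\cup\thc(\pi^*\mcE)$, and since the zero section is fixed by $\rho\oplus\id$ one obtains
\[
\pi^A e(\mcE)=\langle -t\rangle\cup \pi^A e(\mcE).
\]
Applying the connecting homomorphism $\partial$ of the localization sequence for $X\times\A^1\supset X\times\Gm$ kills the left side (it lies in the image of $\pi^A$) while Lemma~\ref{lem:connecting} gives $\partial\langle -t\rangle=\Sigma_\T\eta$, yielding $\eta\cup e(\mcE)=0$. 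The odd-rank hypothesis enters only to ensure the scaling factor has the form $\langle -t\rangle$ rather than $\langle t^{2n}\rangle=1$; no twisted Euler classes, no $\SLc$-multiplicativity, and no appeal to Theorem~\ref{thm:SLvsSLc} are needed.
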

\noindent As a corollary we obtain the following properties of the Borel and Pontryagin classes.
\begin{corollary}[{Corollary~\ref{cor:Pontryagin}}]
	Suppose that $S=\Spec k$ for a field $k$ of characteristic different from $2$. Let $A\in \SH(k)$ be an $SL$-oriented spectrum. Then the following holds after inverting $\eta$ in the coefficients.
	\begin{enumerate}
		\item 
		$b_{2i+1}(\hyperb(E))=0, i\ge 0,$ for the hyperbolic symplectic bundle $\hyperb(E)$ associated to a vector bundle $E$ over $X\in\Sm_k$.
		\item 
		$p_t(E\oplus E')=p_t(E)p_t(E')$ for vector bundles $E,E'$ over $X\in\Sm_k$.
		\item 
		$p_i(E)=0$ for a rank $n$ vector bundle $E$ over $X\in\Sm_k$ and $i>\tfrac{n}{2}$.
	\end{enumerate}
	Here $b_{2i+1}$, $p_i$ and $p_t$ denote the Borel, Pontryagin and total Pontryagin classes of the respective bundles, see Definitons~\ref{def:Borel} and~\ref{def:Pontryagin} or \cite[Definition~14.1]{PW10a}, \cite[Definition~7]{An15} and \cite[Definition~5.6]{HW17}.
\end{corollary}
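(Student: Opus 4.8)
\emph{Proof idea.} Recall that $\hyperb(E)=(E\oplus E^\vee,\ \text{canonical form})$, that $\hyperb(E\oplus E')=\hyperb(E)\perp\hyperb(E')$, and that by definition $p_i(E)=(-1)^i b_{2i}(\hyperb(E))$ and $p_t(E)=\sum_i p_i(E)t^i$. All three assertions will follow from the single statement that, after inverting $\eta$, the total Borel class $b_t(\hyperb(E))=\sum_j b_j(\hyperb(E))t^j$ is a polynomial in $t^2$. Indeed, the vanishing of its odd part is (1); feeding this into the Whitney sum formula $b_t(\hyperb(E\oplus E'))=b_t(\hyperb(E))b_t(\hyperb(E'))$ for Borel classes \cite{PW10a} and comparing coefficients of $t^{2i}$ (all odd-index terms now being zero) gives $b_{2i}(\hyperb(E\oplus E'))=\sum_{a+b=i}b_{2a}(\hyperb(E))b_{2b}(\hyperb(E'))$, which after multiplication by $(-1)^i$ is precisely $p_i(E\oplus E')=\sum_{a+b=i}p_a(E)p_b(E')$, i.e.\ (2); and (3) holds because $\hyperb(E)$ has rank $2n$, so $b_j(\hyperb(E))=0$ for $j>n$ and hence $p_i(E)=0$ for $2i>n$. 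So the corollary reduces to the key statement.

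To prove that key statement I would, by naturality of the Borel classes, reduce to the universal bundle over a Grassmannian and then invoke the splitting principle for $SL$-oriented cohomology with $\eta$ inverted from \cite{An15}. Its mechanism is that for $F$ of odd rank the projective bundle $\mathbb{P}(F)\to X$ has fibres $\mathbb{P}^{2m}$, which become contractible after inverting $\eta$ (since $\mathbb{P}^2$ is the cone of the motivic Hopf map and $\mathbb{P}^{2m}$ is assembled from such cells), so $\mathbb{P}(F)\to X$ induces an isomorphism on $A[\eta^{-1}]$-cohomology; iterating, one passes to a cover with split-injective $A[\eta^{-1}]$-pullback over which $E$ is a direct sum of line bundles $L_k$ and rank-two bundles $V_j$. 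Then the Whitney sum formula writes $b_t(\hyperb(E))=\prod_k\bigl(1+e(\hyperb(L_k))t\bigr)\cdot\prod_j\bigl(1+b_1(\hyperb(V_j))t+e(\hyperb(V_j))t^2\bigr)$. For a line bundle $L$, the $SL$-bundle $\hyperb(L)=L\oplus L^\vee$ has the odd (rank one) summand $L$, so Theorem~\ref{thm:Euler_torsion} gives $\eta\cup e(\hyperb(L))=0$ and the first kind of factor becomes $1$ after inverting $\eta$; thus everything comes down to showing that the linear term $b_1(\hyperb(V))$ of a rank-two factor — which is exactly the $i=0$ case of (1) — is killed by $\eta$.

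This last point, that $\eta\cup b_1(\hyperb(V))=0$ for a rank-two bundle $V$ (equivalently, that the first Borel class of any hyperbolic symplectic bundle is $\eta$-torsion), is the main obstacle; it is the algebraic shadow of the classical vanishing of the first Chern class of a complexified bundle. I would again deduce it from Theorem~\ref{thm:Euler_torsion}, by exhibiting $b_1(\hyperb(V))$ — modulo classes already known to be $\eta$-torsion — as an Euler class of an $SL$-bundle that acquires a rank-one summand after a further base change with injective $A[\eta^{-1}]$-pullback; the delicate part is controlling these covers in the absence of a projective bundle formula for $A$, which is where the machinery of \cite{An15} (and, through Theorem~\ref{thm:SLvsSLc}, the freedom to work with $SL^c$-structures) is needed. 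Granting this, $b_t(\hyperb(E))$ is a polynomial in $t^2$ after inverting $\eta$, and (1), (2), (3) follow as above.
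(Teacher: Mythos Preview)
Your reduction of (2) and (3) to (1) is correct, and in fact your argument for (3) is cleaner than the paper's: the vanishing $p_i(E)=0$ for $2i>n$ is immediate from the rank axiom for Borel classes, since $\hyperb(E)$ has rank $2n$.

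The gap is in your proof of (1). After the splitting principle you are left with the assertion that $\eta\cup b_1(\hyperb(V))=0$ for a rank-two bundle $V$, and you explicitly say ``granting this'' without supplying an argument. The sketch you give --- realising $b_1(\hyperb(V))$ as an Euler class of something with an odd-rank summand after a further injective base change --- is not carried out, and it is not clear how to do it: $b_1$ of a rank-$4$ symplectic bundle is not a top Borel class, so it is not directly an Euler class, and producing the required splitting for $V\oplus V^\vee$ is exactly the difficulty you are trying to avoid.

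The paper sidesteps this entirely with a one-line trick. Rather than splitting $E$ all the way down, one only needs to force trivial determinant: set $E'=E\oplus(\det E)^\vee$. Since $(\det E)^\vee$ is a line bundle, Theorem~\ref{thm:Euler_torsion} (via Lemma~\ref{lem:torsionBorel}) gives $b_t(\hyperb((\det E)^\vee))=1$ after inverting $\eta$, so by Whitney sum $b_t(\hyperb(E))=b_t(\hyperb(E'))$. Now $E'$ has trivial determinant, i.e.\ it is an $\SL$-bundle, and \cite[Corollary~3]{An15} already proves all three statements in that case. The moral: you do not need the full splitting principle, only the single twist by $(\det E)^\vee$ that lands you in the $\SL$ world where the result is already known.
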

\noindent This generalizes \cite[Corollary~3]{An15} where the same properties were obtained assuming the determinant of $E$ to be trivial.

The paper is organized as follows. In Section~\ref{section:Gbundles} we recall the notions of symplectic, $\SL$ and $\SLc$-bundles and outline the general setting of vector bundles with a chosen reduction of the structure group. In the next section we give a uniform definition of $G$-oriented cohomology theories and prove some basic properties. In Section~\ref{section:SLcSL} we show that an $\SL$-oriented cohomology theory admits Thom isomorphisms for vector $\SLc$-bundles. In Section~\ref{section:orientsheaf} we construct a normalized $\SLc$ and $\SL$-orientations on a commutative ring spectrum provided that $A^{0,0}(-)$ is a sheaf and show that such orientations are unique. In the next section we recall some well-known facts on the Hopf map and related elements. Then in the last section we show that the Euler class of a vector $\SL$-bundle admitting an odd rank vector subbundle is annihilated by $\eta$ and derive consequences on the Pontryagin classes.

Throughout the paper we employ the following assumptions and notations.

\begin{tabular}{l|l}
	$S$ & a quasi-compact separated scheme \\
	$\Smk$ & the category of smooth schemes over $S$\\
	$\SHk$ & the motivic stable homotopy category over $S$ \cite{MV99,V98}\\
	$A^{*,*}(-)$ & the cohomology theory represented by $A\in\SHk$, \\ 
	& with the multiplicative structure given by \cite[Section~3]{An17}\\
	$\Th(E)$ & the Thom space of a vector bundle $E$\\
	$\triv_X$ & the trivialized line bundle over $X\in\Smk$
\end{tabular}

\section{Vector bundles with additional structure} \label{section:Gbundles}
\begin{definition} \label{def:hyperb}
	A \textit{rank $2n$ symplectic bundle} (or \textit{vector $Sp$-bundle}) over $X\in\Smk$ is a pair $\mathcal{E}=(E,\phi)$ with $E$ being a rank $2n$ vector bundle over $X$ and $\phi$ being a symplectic form on $E$. An \textit{isomorphism of symplectic bundles} $\theta\colon (E,\phi)\rightiso (E',\phi')$ is an isomorphism of vector bundles $\theta\colon E\rightiso E'$ such that the diagram
	\[
	\xymatrix{
		E \ar[r]^\theta \ar[d]_\simeq & E' \ar[d]^\simeq \\
		E^\vee & (E')^\vee \ar[l]_{\theta^\vee}
	}
	\]
	commutes, where the vertical isomorphisms are given by the symplectic forms. The \textit{sum of symplectic bundles} is induced by the direct sum of vector bundles together with the orthogonal sum of symplectic forms,
	\[
	(E,\phi)\oplus (E',\phi') = (E\oplus E',\phi\perp\phi').
	\]
	
	For a vector bundle $E$ over $X\in\Smk$ the \textit{associated hyperbolic symplectic bundle} (or \textit{hyperbolization of $E$}) is the symplectic bundle
	\[
	\hyperb(E)= \left(E\oplus E^{\vee}, \begin{pmatrix}
	0 & 1 \\ -1 & 0
	\end{pmatrix}\right).
	\]	
	The \textit{trivialized} rank $2n$ symplectic bundle over $X\in\Smk$ is 
	\[
	\triv^{\Sp,2n}_X=\hyperb(\triv_X)^{\oplus n} =\left(\triv_{X}^{\oplus 2n}, \begin{pmatrix}
	0 & 1 \\ -1 & 0
	\end{pmatrix}^{\oplus n}\right).
	\]
	A \textit{trivial} rank $2n$ symplectic bundle over $X\in\Smk$ is a rank $2n$ symplectic bundle over $X\in\Smk$ isomorphic to the trivialized rank $2n$ symplectic bundle $\triv^{\Sp,2n}_X$.

\end{definition}

\begin{definition} \label{def:SLbundle}
	A \textit{rank $n$ special linear vector bundle} (or \textit{vector $SL$-bundle}) over $X\in\Smk$ is a pair $\mathcal{E}=(E,\lambda)$ with $E$ being a rank $n$ vector bundle over $X$ and $\lambda\colon \det E \rightiso \triv_X$ being an isomorphism of line bundles. An \textit{isomorphism of vector $SL$-bundles} $\theta\colon (E,\lambda)\rightiso (E',\lambda')$ is an isomorphism of vector bundles $\theta\colon E\rightiso E'$ such that $\lambda= \lambda'\circ \det \theta $,
	\[
	\xymatrix{
		\det E \ar[rr]^{\det \theta}_{\simeq} \ar[dr]^\simeq_\lambda & & \det E' \ar[dl]_\simeq^{\lambda'} \\
		& \triv_X &
	}
	\]
	The \textit{sum of special linear vector bundles} is induced by the direct sum of vector bundles together with the canonical isomorphisms $\det (E\oplus E')\cong \det E \otimes \det E'$ and $\triv_X\otimes \triv_X\cong\triv_X$ which we omit from the notation,
	\[
	(E,\lambda)\oplus (E',\lambda') = (E\oplus E',\lambda\otimes \lambda').
	\]
	The \textit{trivialized} rank $n$ vector $\SL$-bundle over $X\in\Smk$ is $\triv^{\SL,n}_X=(\triv_X,\id)^{\oplus n}$. A \textit{trivial} rank $n$ vector $\SL$-bundle over $X\in\Smk$ is a rank $n$ vector $\SL$-bundle over $X\in\Smk$ isomorphic to the trivialized rank $n$ vector $\SL$-bundle. 
\end{definition}

\begin{definition} \label{def:SLc}
	A \textit{rank $n$ vector $SL^c$-bundle} over $X\in\Smk$ is a triple $\mathcal{E}=(E,L,\lambda)$ with $E$ being a rank $n$ vector bundle over $X$, $L$ being a line bundle over $X$ and $\lambda\colon \det E \rightiso L^{\otimes 2}$ being an isomorphism of line bundles. An \textit{isomorphism of vector $SL^c$-bundles} $\theta\colon (E,L,\lambda)\rightiso (E',L',\lambda')$ is a pair $\theta=(\underline{\theta}\colon E\rightiso E',\,\hat{\theta}\colon L\rightiso L')$ consisting of an isomorphism of vector bundles and an isomorphism of line bundles such that the following diagram commutes.
	\[
	\xymatrix{
		\det E \ar[r]^{\det \underline{\theta}}_{\simeq} \ar[d]^\simeq_\lambda & \det E' \ar[d]_\simeq^{\lambda'} \\
		L^{\otimes 2} \ar[r]_{\hat{\theta}^{\otimes 2}}^{\simeq} & (L')^{\otimes 2}
	}
	\]
	We will usually abuse the notation omitting the underline and denoting the isomorphism $\underline{\theta}\colon E\rightiso E'$ by $\theta$ itself. 
	
	The \textit{sum of vector $SL^c$-bundles} is induced by the direct sum of vector bundles together with the canonical isomorphisms 
	\[
	\det (E\oplus E')\simeq \det E \otimes \det E', \quad (L\otimes L')^{\otimes 2}\simeq L^{\otimes 2} \otimes (L')^{\otimes 2}
	\]
	which we omit from the notation,
	\[
	(E,L,\lambda)\oplus (E',L',\lambda') = (E\oplus E',L\otimes L', \lambda\otimes \lambda').
	\]
	The \textit{trivialized} rank $n$ vector $\SLc$-bundle over $X\in\Smk$ is $\triv^{\SLc,n}_X=(\triv_X,\triv_X,\theta)^{\oplus n}$ with $\theta\colon \triv_X\rightiso \triv_X^{\otimes 2}$ being the canonical isomorphism. A \textit{trivial} rank $n$ vector $\SLc$-bundle over $X\in\Smk$ is a rank $n$ vector $\SLc$-bundle over $X\in\Smk$ isomorphic to the trivialized rank $n$ vector $\SLc$-bundle. 
\end{definition}

\begin{definition}
	Following the above notation we sometimes refer to rank $n$ vector bundles over $X\in\Smk$ as \textit{rank $n$ vector $GL$-bundles}. The \textit{trivialized} rank $n$ vector $\GL$-bundle over $X\in\Smk$ is $\triv^{\GL,n}_X=\triv_X^{\oplus n}$. A \textit{trivial} rank $n$ vector $\GL$-bundle over $X\in\Smk$ is a vector bundle isomorphic to $\triv_X^{\oplus n}$.
\end{definition}

\begin{definition} \label{def:Pfaffian}
	The hyperbolization construction from Definition~\ref{def:hyperb} gives rise to a morphism of groupoids
	\[
	\{\text{vector bundles over X} \} \xrightarrow{\hyperb}   \{\text{symplectic bundles over X} \}.
	\]
	Let $(E,\phi)$ be a symplectic bundle over $X\in \Smk$. The Pfaffian of $\phi$ induces an isomorphism of line bundles $\lambda_{\phi}\colon \det E \rightiso \triv_X$ (see, for example, the discussion above \cite[Definition~4.5]{An16a}) giving rise to the \textit{associated $SL$-bundle} $(E,\lambda_{\phi})$. This rule can be promoted in a canonical way to a morphism of groupoids
	\[
	\{\text{symplectic bundles over X} \} \xrightarrow{\Phi}   \{\text{vector $\SL$-bundles over X} \}.
	\]
	Let $(E,\lambda)$ be a vector $\SL$-bundle over $X\in \Smk$. The canonical isomorphism $\theta\colon \triv_X\rightiso \triv_X^{\otimes 2}$ gives rise to the \textit{associated $SL^c$-bundle} $(E,\triv_X,\theta \circ \lambda)$. This rule can be promoted in a canonical way to a morphism of groupoids
	\[
	\{\text{vector $\SL$-bundles over X} \} \xrightarrow{\Psi}   \{\text{vector $\SLc$-bundles over X} \}.
	\]	
	Let $(E,L,\lambda)$ be a vector $\SLc$-bundle over $X\in \Smk$. Forgetting about the additional structure $(L,\lambda)$ we obtain a morphism of groupoids
	\[
	\{\text{vector $\SLc$-bundles over X} \} \xrightarrow{\Theta}   \{\text{vector bundles over X} \}.
	\]		
\end{definition}

\begin{lemma} \label{lem:pfaf}
	In the notation of Definition~\ref{def:Pfaffian} there are canonical isomorphisms
	\begin{enumerate}
		\item
		$\hyperb(E_1\oplus E_2)\cong \hyperb(E_1) \oplus \hyperb(E_2)$ for vector bundles $E_1,E_2$,		
		\item
		$\Phi(\mathcal{E}_1\oplus \mathcal{E}_2)\cong \Phi(\mathcal{E}_1) \oplus \Phi(\mathcal{E}_2)$ for symplectic bundles $\mathcal{E}_1,\mathcal{E}_2$,
		\item
		$\Psi(\mathcal{E}_1\oplus \mathcal{E}_2)\cong \Psi(\mathcal{E}_1) \oplus \Psi(\mathcal{E}_2)$ for vector $SL$-bundles $\mathcal{E}_1,\mathcal{E}_2$,
		\item
		$\Theta(\mathcal{E}_1\oplus \mathcal{E}_2)\cong \Theta(\mathcal{E}_1) \oplus \Theta(\mathcal{E}_2)$ for vector $SL^c$-bundles $\mathcal{E}_1,\mathcal{E}_2$,		
		\item 
		$\Phi(\triv^{\Sp,2n}_X)\cong \triv^{\SL,2n}_X$,
		\item
		$\Psi(\triv^{\SL,n}_X)\cong \triv^{\SLc,n}_X$,
		\item
		$\Theta(\triv^{\SLc,n}_X)\cong \triv_X^{\oplus n}$.
	\end{enumerate}
\end{lemma}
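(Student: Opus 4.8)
The plan is to verify each of the seven isomorphisms essentially by unwinding the definitions, so the bulk of the argument is bookkeeping with canonical isomorphisms of line bundles; there is no serious obstacle, only the need to be careful that the chosen isomorphisms are compatible with the structure maps (symplectic forms, trivializations of determinants, chosen square roots). I would organize the proof around the two genuinely substantive points — the behavior of the Pfaffian under orthogonal direct sums in (2) and (5), and the block-diagonal nature of $\hyperb$ in (1) — and treat the rest as formal consequences.

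First I would dispose of (1). The underlying vector bundle of $\hyperb(E_1\oplus E_2)$ is $(E_1\oplus E_2)\oplus(E_1\oplus E_2)^\vee$, which is canonically isomorphic, after permuting summands, to $(E_1\oplus E_1^\vee)\oplus(E_2\oplus E_2^\vee)$; under this permutation the standard hyperbolic form $\left(\begin{smallmatrix}0&1\\-1&0\end{smallmatrix}\right)$ on the left becomes the orthogonal sum of the two hyperbolic forms on the right, because the form only pairs $E_i$ with $E_i^\vee$ and the cross terms vanish. That permutation isomorphism is exactly an isomorphism of symplectic bundles by Definition~\ref{def:hyperb}. Then (7) is immediate from (1) together with $\triv_X^\vee\cong\triv_X$, and (5) reduces, via Definition~\ref{def:Pfaffian} and (1)--(2), to computing $\Phi$ on a single $\hyperb(\triv_X)$: the Pfaffian of $\left(\begin{smallmatrix}0&1\\-1&0\end{smallmatrix}\right)$ is $1$, so $\lambda_\phi$ is the canonical trivialization of $\det(\triv_X^{\oplus 2})$, giving $\triv^{\SL,2}_X$, and summing over the $n$ blocks using (2) yields $\triv^{\SL,2n}_X$.

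Next I would treat (2), which is the only place where a real property of the Pfaffian is used. For symplectic bundles $\mathcal{E}_i=(E_i,\phi_i)$, the underlying bundle of $\Phi(\mathcal{E}_1\oplus\mathcal{E}_2)$ is $E_1\oplus E_2$ with the $\SL$-structure $\lambda_{\phi_1\perp\phi_2}\colon\det(E_1\oplus E_2)\rightiso\triv_X$. Using the canonical isomorphism $\det(E_1\oplus E_2)\cong\det E_1\otimes\det E_2$, the multiplicativity of the Pfaffian under orthogonal sums gives $\lambda_{\phi_1\perp\phi_2}=\lambda_{\phi_1}\otimes\lambda_{\phi_2}$ up to the canonical identification $\triv_X\otimes\triv_X\cong\triv_X$; this is precisely what is needed for the identity morphism on $E_1\oplus E_2$ to be an isomorphism of vector $\SL$-bundles $\Phi(\mathcal{E}_1)\oplus\Phi(\mathcal{E}_2)\rightiso\Phi(\mathcal{E}_1\oplus\mathcal{E}_2)$ in the sense of Definition~\ref{def:SLbundle}. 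I would cite the discussion above \cite[Definition~4.5]{An16a} for the multiplicativity statement rather than reprove it.

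Finally, (3), (4), and (6) are purely formal. For (3), $\Psi$ only replaces $\lambda$ by $\theta\circ\lambda$ where $\theta\colon\triv_X\rightiso\triv_X^{\otimes 2}$ is canonical, and since $\theta$ is compatible with the canonical isomorphisms $\triv_X\otimes\triv_X\cong\triv_X$ and $(\triv_X\otimes\triv_X)^{\otimes 2}\cong\triv_X^{\otimes 2}\otimes\triv_X^{\otimes 2}$ appearing in the sum of $\SLc$-bundles, the identity on $E_1\oplus E_2$ together with the identity on $\triv_X$ is an isomorphism $\Psi(\mathcal{E}_1)\oplus\Psi(\mathcal{E}_2)\rightiso\Psi(\mathcal{E}_1\oplus\mathcal{E}_2)$; (6) is the special case $\mathcal{E}_i=(\triv_X,\id)$. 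For (4), $\Theta$ just forgets $(L,\lambda)$, and the sum of $\SLc$-bundles is defined so that its underlying vector bundle is the direct sum, so the identity is the desired isomorphism, and the trivialized $\SLc$-bundle has underlying bundle $\triv_X^{\oplus n}$ by construction, giving (4) and (7) again for the $\SLc$ normalization. I expect the only point requiring genuine care is keeping the various canonical isomorphisms $\det(E\oplus E')\cong\det E\otimes\det E'$, $\triv_X\otimes\triv_X\cong\triv_X$, and $(L\otimes L')^{\otimes 2}\cong L^{\otimes 2}\otimes(L')^{\otimes 2}$ coherent with each other, which is a standard but slightly tedious diagram chase that I would indicate rather than spell out in full.
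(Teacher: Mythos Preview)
Your proposal is correct and is simply a detailed unpacking of what the paper records as ``Straightforward''; both approaches amount to checking that the canonical isomorphisms $\det(E\oplus E')\cong\det E\otimes\det E'$, $\triv_X\otimes\triv_X\cong\triv_X$, and the multiplicativity of the Pfaffian are compatible with the definitions of $\hyperb$, $\Phi$, $\Psi$, $\Theta$. One small slip: your claim in the first paragraph that ``(7) is immediate from (1) together with $\triv_X^\vee\cong\triv_X$'' is a misnumbering (item (7) concerns $\Theta$, not $\hyperb$), but you correctly handle (7) in your final paragraph, so this is a typo rather than a gap.
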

\begin{proof}
	Straightforward.
\end{proof}

\begin{lemma} \label{lem:locallytrivial}
	Let $G=Sp,\,SL$ or $SL^c$. For a rank $n$ vector $G$-bundle $\mcE$ over $X\in \Smk$ and a point $x\in X$ there exists a Zariski open subset $U\subseteq X$ such that $x\in U$ and the restriction $\mcE|_{U}$ is a trivial rank $n$ vector $G$-bundle.
\end{lemma}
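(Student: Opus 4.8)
The plan is to reduce the statement for each $G$ to the case $G = \GL$, which is the classical local triviality of vector bundles, and then upgrade the chosen local trivialization of the underlying vector bundle to a local trivialization respecting the extra structure. Fix a rank $n$ vector $G$-bundle $\mcE$ over $X$ and a point $x \in X$. First I would choose a Zariski open neighbourhood $U_0 \ni x$ over which the underlying vector bundle $E$ is trivial; after shrinking we may also assume that every line bundle appearing in the data (namely $L$ in the $\SLc$ case) is trivial on $U_0$, since line bundles are Zariski-locally trivial. Working over such $U_0$ we are then left with the following purely algebraic problem: given the extra structure on a free module, find a change of basis putting that structure into the standard form.

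Next I would treat the three cases separately. For $G = \SL$: over $U_0$ we have $E|_{U_0} \cong \triv_{U_0}^{\oplus n}$ and an isomorphism $\lambda\colon \det E|_{U_0} \rightiso \triv_{U_0}$; transporting along the chosen trivialization, $\lambda$ corresponds to multiplication by a unit $u \in \struct(U_0)^\times$. Rescaling one basis vector by $u$ (i.e.\ composing the trivialization with $\mathrm{diag}(u,1,\dots,1) \in \GL_n(\struct(U_0))$) changes $\det$ by $u$, so the new trivialization carries $\lambda$ to $\id$, giving $\mcE|_{U_0} \cong \triv^{\SL,n}_{U_0}$. For $G = \SLc$: similarly $L|_{U_0}$ is trivial, so $\lambda\colon \det E|_{U_0} \rightiso L^{\otimes 2}|_{U_0}$ becomes multiplication by a unit $u$, and the same rescaling by $u$ in the $E$-trivialization (keeping the $L$-trivialization fixed, or rescaling it instead) produces an isomorphism onto $\triv^{\SLc,n}_{U_0}$. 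For $G = \Sp$: here $n$ is even, say $n = 2m$, and we have a symplectic (in particular nondegenerate alternating) form $\phi$ on the free module $\struct(U_0)^{\oplus 2m}$; the standard symplectic Gram–Schmidt process produces a symplectic basis, i.e.\ a section $g \in \GL_{2m}(\struct(U_0))$ carrying $\phi$ to the standard form $\begin{pmatrix} 0 & 1 \\ -1 & 0 \end{pmatrix}^{\oplus m}$, so $\mcE|_{U_0} \cong \triv^{\Sp,2m}_{U_0}$. In each case one may need to shrink $U_0$ once more to $U \ni x$ so that the relevant unit or the pivots in the Gram–Schmidt step are invertible on all of $U$, but this is automatic since a unit in the local ring $\struct_{X,x}$ extends to a unit on some neighbourhood.

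The main point requiring care — though it is not really an obstacle — is the symplectic case: one must run Gram–Schmidt over the ring $\struct(U)$ rather than a field, so at each stage one picks a vector $e$ with $\phi(e, e') = 1$ for some $e'$ (possible after localizing, since the form is a perfect pairing, so some pairing of basis vectors is a unit on a neighbourhood of $x$), splits off the hyperbolic plane $\langle e, e'\rangle$, and passes to its orthogonal complement, which is again free of rank $2m - 2$ with a symplectic form. Iterating $m$ times and intersecting the finitely many neighbourhoods obtained yields the desired $U$. The $\SL$ and $\SLc$ cases are immediate once one observes that $\SL_n$-reduction amounts to normalizing a single unit, and that line bundles are Zariski-locally trivial.
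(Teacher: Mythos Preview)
Your argument is correct and follows essentially the same approach as the paper: reduce to the local situation, where the underlying vector bundle (and the line bundle $L$ in the $\SLc$ case) is free, and then adjust the trivialization by a diagonal matrix to normalize the determinant data in the $\SL$ and $\SLc$ cases. The only cosmetic differences are that the paper passes once and for all to the local ring $\struct_{X,x}$ rather than repeatedly shrinking $U_0$, and for $G=\Sp$ it simply cites \cite[Chapter~1, Corollary~3.5]{MH73} for the existence of a symplectic basis over a local ring, whereas you sketch the Gram--Schmidt procedure explicitly; the content is the same.
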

\begin{proof}
	It is sufficient to show that over a local ring every vector $G$-bundle is trivial. Let $W=\Spec R$ be the spectrum of a local ring. Recall that every vector bundle (without additional structure) over $W$ is trivial.
	
	$G=Sp$: follows from \cite[Chapter~1, Corollary~3.5]{MH73}.
	
	$G=SL$: we need to show that for a vector $\SL$-bundle $(\triv_W^{\oplus n}, \lambda)$ there exists an isomorphism of vector $\SL$-bundles $f\colon (\triv_W^{\oplus n}, \lambda)\rightiso \triv^{\SL,n}_W$. Put $\triv^{\SL,n}_W=(\triv_W^{\oplus n},\lambda_{\mathrm{triv}})$. One may take $f\colon \triv_W^{\oplus n}\rightiso \triv_W^{\oplus n}$ to be the isomorphism given by the diagonal matrix $\mathrm{diag}(\lambda_{\mathrm{triv}}\circ \lambda^{-1},1,1,\hdots,1)$.
	
	$G=SL^c$: we need to show that for a vector $\SLc$-bundle $(\triv_W^{\oplus n},\triv_W, \lambda)$ there exists an isomorphism of vector $\SLc$-bundles $f\colon (\triv_W^{\oplus n}, \triv_W,\lambda)\rightiso \triv^{\SLc,n}_W$. Put $\triv^{\SLc,n}_W=(\triv_W^{\oplus n}, \triv_W,\lambda_{\mathrm{triv}})$. One may take $f=(\underline{f},\id)$ with $\underline{f}\colon \triv_W^{\oplus n}\rightiso \triv_W^{\oplus n}$ given by the diagonal matrix $\mathrm{diag}(\theta^{-1}\circ\lambda_{\mathrm{triv}}\circ \lambda^{-1}\circ \theta,1,\hdots,1)$ with $\theta\colon\triv_X\rightiso \triv_X^{\otimes 2}$ being the canonical isomorphism.
\end{proof}

\begin{remark} \label{rem:torsors}
	Let $\mathrm{SL}^{\mathrm{c}}_{n}$ be the kernel of the homomorphism 
	\[
	\GL_n\times \Gm \to \Gm,\quad (g,t)\mapsto t^{-2}\det g,
	\]
	(cf. \cite[Definition~3.3]{PW10b}). It is easy to see that there are canonical equivalences of groupoids
	\begin{gather*}
	\{\text{rank $2n$ symplectic bundles over X} \} \stackrel{\simeq}{\longleftrightarrow} \{\text{$\Sp_{2n}$-torsors over X} \},\\
	\{\text{rank $n$ vector $\SL$ bundles over X} \} \stackrel{\simeq}{\longleftrightarrow} \{\text{$\SL_{n}$-torsors over X} \},\\
	\{\text{rank $n$ vector $\SLc$-bundles over X} \} \stackrel{\simeq}{\longleftrightarrow} \{\text{$\mathrm{SL}^{\mathrm{c}}_{n}$-torsors over X} \},	
	\end{gather*}
	with the torsors considered in the Zariski topology. Under the above equivalences the sum of vector $G$-bundles corresponds to the canonical homomorphisms
	\[
	\Sp_{2n}\times \Sp_{2m}\to \Sp_{2(n+m)},\quad \SL_{n}\times \SL_{m}\to \SL_{n+m},\quad\mathrm{SL}^{\mathrm{c}}_{n}\times \mathrm{SL}^{\mathrm{c}}_{m}\to \mathrm{SL}^{\mathrm{c}}_{n+m},
	\]
	while the morphisms $\hyperb$, $\Phi$, $\Psi$ and $\Theta$ from Definition~\ref{def:Pfaffian} correspond to the canonical homomorphisms
	\[
	\GL_n\to \Sp_{2n},\quad \Sp_{2n}\to \SL_{2n},\quad \SL_{n}\to \mathrm{SL}^{\mathrm{c}}_{n},\quad \mathrm{SL}^{\mathrm{c}}_{n}\to \GL_n.
	\]
\end{remark}

\begin{remark} \label{rem:reducedgroup}
	Let $G$ be a Zariski sheaf of groups on $\Smk$ and $\rho\colon G\to \GL_n$ be a homomorphism. Following Remark~\ref{rem:torsors} one can define the notion of a \textit{vector bundle with the structure group reduced to $G$} or a \textit{vector $G$-bundle} as a triple $(E,\phs,\psi)$ where 
	\begin{enumerate}
		\item 
		$E$ is a vector bundle over $X$, 
		\item
		$\phs$ is a $G$-torsor over $X$,
		\item
		$\psi\colon \triv_X^{\oplus n}\times_{G} \phs \xrightarrow{\simeq} E$ is an isomorphism of vector bundles.
	\end{enumerate}
	The case of $G=\mathrm{St}_n$ the Steinberg group seems to be of particular interest since, as the author learned from F.~Morel and A.~Sawant, $\mathrm{St}_n$ is closely related to the universal $\A^1$-covering group of $\SL_n$ whence the notion of a vector bundle with the structure group reduced to $\mathrm{St}_n$ may be related to the notion of Spin structure from classical topology. 
\end{remark}

\section{Orientations and twisted cohomology}
From now on we assume $G=\GL,\,\Sp,\,\SL$ or $\SLc$.

\begin{definition}
	Let $A\in \SHk$ be a spectrum and $E$ be a rank $n$ vector bundle over $X\in\Smk$. Denote $A^{*,*}(-;E)$ the presheaf on the slice category $\Smk/X$ (i.e. the category of morphisms of $S$-schemes $Y\xrightarrow{p} X$ with $Y\in\Smk$) given by
	\[
	[Y\xrightarrow{p} X]\mapsto A^{*+2n,*+n}(\Th(p^*E)).
	\]
	We refer to this presheaf as \textit{$A$-cohomology groups twisted by $E$}. For a rank $n$ vector $G$-bundle $\mcE$ we abuse the notation writing $A^{*,*}(-;\mcE)$ for $A^{*,*}(-;E)$ with $E$ being the vector bundle obtained from $\mcE$ forgetting the additional structure.
\end{definition}

\begin{remark}
	For $A\in\SHk$ and $X\in\Smk$ the suspension isomorphism
	\[
	\Sigma^n_\T\colon A^{*,*}(-)\rightiso A^{*+2n,*+n}((-)_+ \wedge \T^{\wedge n})
	\]
	of presheaves on $\Smk$ induces an isomorphism  
	\[
	\Sigma^n_\T\colon A^{*,*}(-)\rightiso A^{*,*}(-;\triv^{\oplus n}_X)
	\]
	of presheaves on the slice category $\Smk/X$.
\end{remark}

\begin{definition}[{cf. \cite[Definition~3.1.1]{PS03}, \cite[Definition~2.2.1]{S07a}, \cite[Definition~14.2]{PW10a}, \cite[Definitions~5.1 and~12.1]{PW10b} and \cite[Definition~4]{An15}}] \label{def:orientation}
	A \textit{normalized $G$-orientation} of a commutative ring spectrum $A\in\SHk$ is a rule which assigns to each rank $n$ vector $G$-bundle $\mathcal{E}$ over $X\in\Smk$ an element 
	\[
	\thc(\mathcal{E})\in A^{0,0}(X;\mcE)=A^{2n,n}(\Th(\mathcal{E}))
	\]
	with the following properties:
	\begin{enumerate}
		\item \label{def:orientation_iso}
		For an isomorphism $\theta\colon \mathcal{E}\xrightarrow{\simeq} \mathcal{E}'$ of vector $G$-bundles over $X\in\Smk$ one has 
		\[
		\thc(\mathcal{E})=\theta^A \thc(\mathcal{E}')
		\]
		where 
		$
		\theta^A\colon  A^{0,0}(X;\mcE')=A^{2n,n}(\Th(\mcE'))\to A^{2n,n}(\Th(\mcE))= A^{0,0}(X;\mcE)
		$
		is the pullback induced by $\theta$.
		\item \label{def:orientation_pull}
		For a morphism $f\colon Y\to X$ of smooth $S$-schemes and a vector $G$-bundle $\mcE$ over $X$ one has 
		\[
		f^A \thc(\mathcal{E})=\thc(f^*\mathcal{E})
		\]
		where 
		$
		f^A\colon  A^{0,0}(X;\mcE)=A^{2n,n}(\Th(\mcE))\to A^{2n,n}(\Th(f^*\mcE))= A^{0,0}(Y;f^*\mcE)
		$
		is the pullback induced by the morphism of total spaces $f^*\mathcal{E}\to \mcE$.
		\item \label{def:orientation_mult}
		For vector $G$-bundles $\mathcal{E}$ and $\mathcal{E}'$ over $X\in\Smk$ one has 
		\[
		\thc(\mathcal{E}\oplus \mathcal{E}')=q_1^A \thc(\mathcal{E})\cup q_2^A \thc(\mathcal{E}')
		\]
		where $q_1,q_2$ are the projections from $\mathcal{E}\oplus \mathcal{E}'$ to its factors.
		\item \label{def:orientation_norm}
		$G=\GL,\,\SL$ or $\SLc$:
		\[
		\thc(\triv^{G,1}_S)=\Sigma_{\T} 1 \in A^{0,0}(S;\triv_S).
		\]
		
		\noindent $G=\Sp$:
		\[
		\thc(\triv^{G,2}_S)=\Sigma^2_{\T} 1 \in A^{0,0}(S;\triv_S).
		\]
	\end{enumerate}
	We refer to the classes $\thc(\mathcal{E})$ as \textit{Thom classes}. A commutative ring spectrum $A$ with a chosen normalized $G$-orientation is called a \textit{$G$-oriented spectrum}. 
\end{definition}

\begin{lemma} \label{lem:orientationshomo}
	Let $A\in \SHk$ be a commutative ring spectrum. Then normalized orientations of $A$ for different structure groups induce each other as follows.
	\[
	\text{$GL$-oriented}\Rightarrow 
	\text{$SL^c$-oriented}\Rightarrow
	\text{$SL$-oriented}\Rightarrow
	\text{$Sp$-oriented}.
	\]
\end{lemma}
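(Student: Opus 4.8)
The plan is to establish the three implications one at a time, in each case defining the Thom classes for the weaker structure by restricting along the morphism of groupoids from Definition~\ref{def:Pfaffian} (equivalently, along the corresponding homomorphism of structure sheaves from Remark~\ref{rem:torsors}), and then checking the four axioms of Definition~\ref{def:orientation}. Concretely, given a $\GL$-orientation with Thom classes $\thc^{\GL}$, for a vector $\SLc$-bundle $\mcE=(E,L,\lambda)$ one sets $\thc^{\SLc}(\mcE):=\thc^{\GL}(\Theta(\mcE))=\thc^{\GL}(E)$; given an $\SLc$-orientation, for a vector $\SL$-bundle $\mcE=(E,\lambda)$ one sets $\thc^{\SL}(\mcE):=\thc^{\SLc}(\Psi(\mcE))$; and given an $\SL$-orientation, for a symplectic bundle $\mcE=(E,\phi)$ one sets $\thc^{\Sp}(\mcE):=\thc^{\SL}(\Phi(\mcE))$, where the underlying Thom space in each case is unchanged so that the target group $A^{2n,n}(\Th(\mcE))$ is literally the same.

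For each implication I would verify the axioms as follows. Axiom~\eqref{def:orientation_iso} (isomorphism invariance): since $\Theta,\Psi,\Phi$ are functors of groupoids, an isomorphism $\theta$ of $G$-bundles is sent to an isomorphism of the associated weaker-structure bundles inducing the same pullback on the common Thom space, so the identity $\thc(\mcE)=\theta^A\thc(\mcE')$ for the stronger orientation transports directly. Axiom~\eqref{def:orientation_pull} (naturality in $X$): all of $\Theta,\Psi,\Phi$ commute with pullback of bundles along $f\colon Y\to X$ in the obvious way, and again the Thom space of $f^*\mcE$ agrees with that of the underlying vector bundle of $f^*(\text{associated bundle})$, so $f^A\thc(\mcE)=\thc(f^*\mcE)$ follows from the corresponding property upstream. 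Axiom~\eqref{def:orientation_mult} (additivity): here one invokes Lemma~\ref{lem:pfaf}(1)--(4), which says precisely that $\Theta,\Psi,\Phi$ and $\hyperb$ are compatible with direct sums up to canonical isomorphism; combined with axiom~\eqref{def:orientation_iso} (already verified) this reduces $\thc(\mcE\oplus\mcE')$ for the weaker structure to the product formula upstream. Axiom~\eqref{def:orientation_norm} (normalization): for the $\GL\Rightarrow\SLc$ and $\SLc\Rightarrow\SL$ steps, $\Theta(\triv^{\SLc,1}_S)\cong\triv_S$ and $\Psi(\triv^{\SL,1}_S)\cong\triv^{\SLc,1}_S$ by Lemma~\ref{lem:pfaf}(7) and~(6), so $\thc(\triv^{G,1}_S)=\Sigma_\T 1$ is inherited; for the $\SL\Rightarrow\Sp$ step, $\Phi(\triv^{\Sp,2}_S)\cong\triv^{\SL,2}_S$ by Lemma~\ref{lem:pfaf}(5) (with $n=1$), so $\thc^{\Sp}(\triv^{\Sp,2}_S)=\thc^{\SL}(\triv^{\SL,2}_S)$, and since $\triv^{\SL,2}_S=\triv^{\SL,1}_S\oplus\triv^{\SL,1}_S$ the already-proven additivity and normalization for the $\SL$-orientation give this the value $\Sigma^2_\T 1$, matching the symplectic normalization convention.

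The routine but slightly delicate point — the only real content — is bookkeeping the identifications of Thom spaces and twisting line bundles: one must make sure that the canonical isomorphisms $\det(E\oplus E')\cong\det E\otimes\det E'$ and $(L\otimes L')^{\otimes2}\cong L^{\otimes2}\otimes(L')^{\otimes2}$ that are suppressed in the definitions of the sum operations, and the Pfaffian identification $\lambda_\phi$, are compatible with the canonical isomorphisms of Thom spaces used implicitly when one writes $A^{2n,n}(\Th(\mcE\oplus\mcE'))$. This is exactly the kind of coherence that Lemma~\ref{lem:pfaf} is designed to package, so I expect no genuine obstacle, only care in chasing the diagrams; transitivity of the three implications then gives the displayed chain.
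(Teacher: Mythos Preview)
Your proposal is correct and follows essentially the same approach as the paper: define the Thom classes for the weaker structure via the morphisms $\Theta,\Psi,\Phi$ of Definition~\ref{def:Pfaffian}, deduce axioms~\eqref{def:orientation_iso} and~\eqref{def:orientation_pull} from functoriality, and axioms~\eqref{def:orientation_mult} and~\eqref{def:orientation_norm} from Lemma~\ref{lem:pfaf}. The paper treats only the $\SLc\Rightarrow\SL$ case explicitly and declares the others similar, whereas you spell out all three and in particular note the extra step needed for the symplectic normalization (reducing $\thc^{\SL}(\triv^{\SL,2}_S)$ to $\Sigma^2_\T 1$ via additivity); this is a harmless elaboration, not a different argument.
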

\begin{proof}
	We will show that an $\SLc$-oriented spectrum $A$ has a canonical normalized $\SL$-orientation, the other cases are similar. Let $\mcE$ be an $\SL$-bundle over $X\in\Smk$ and put $\thc(\mcE)=\thc(\Psi(\mcE))$ for the associated $\SLc$-bundle $\Psi(\mcE)$ (see Definition~\ref{def:Pfaffian}). Properties~\ref{def:orientation_iso} and~\ref{def:orientation_pull} of Definition~\ref{def:orientation} follow from the functoriality of $\Psi$, properties~\ref{def:orientation_mult} and~\ref{def:orientation_norm} follow from Lemma~\ref{lem:pfaf}.
\end{proof}

\begin{lemma} \label{lem:trivialThom}
	Let $A\in\SHk$ be a $G$-oriented spectrum and $X\in\Smk$. Then
	\[
	\thc(\triv^{G,n}_X)=\Sigma^n_{\T} 1 \in A^{0,0}(X;\triv^{G,n}_X ).
	\]
\end{lemma}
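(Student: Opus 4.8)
The plan is to first establish the identity over the base $S$ by induction on the rank, directly out of the axioms in Definition~\ref{def:orientation}, and then to transport it to an arbitrary $X\in\Smk$ by pulling back along the structure morphism $X\to S$. No use of any sheaf hypothesis is needed; everything is formal from the orientation axioms together with the behaviour of the $\T$-suspension isomorphism.

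\emph{Over $S$.} For $G=\GL,\SL,\SLc$ the rank one case $\thc(\triv^{G,1}_S)=\Sigma_\T 1$ is precisely the normalization axiom~\ref{def:orientation_norm}, and for $G=\Sp$ the rank two case $\thc(\triv^{\Sp,2}_S)=\Sigma^2_\T 1$ is the normalization axiom. For the inductive step, the canonical isomorphisms appearing in Definitions~\ref{def:SLbundle} and~\ref{def:SLc} identify $\triv^{G,n}_S$ with $\triv^{G,1}_S\oplus\triv^{G,n-1}_S$, while for $G=\Sp$ the decomposition $\triv^{\Sp,2n}_S=\hyperb(\triv_S)^{\oplus n}$ identifies $\triv^{\Sp,2n}_S$ with $\triv^{\Sp,2}_S\oplus\triv^{\Sp,2(n-1)}_S$. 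Applying the additivity axiom~\ref{def:orientation_mult}, the isomorphism axiom~\ref{def:orientation_iso} to pass through the canonical identification, and the inductive hypothesis, reduces the claim over $S$ to the assertion
\[
q_1^A(\Sigma^a_\T 1)\cup q_2^A(\Sigma^b_\T 1)=\Sigma^{a+b}_\T 1
\]
under the canonical identification $\triv^{\oplus a}_S\oplus\triv^{\oplus b}_S\cong\triv^{\oplus(a+b)}_S$, where $q_1,q_2$ denote the projections onto the two summands.

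This last assertion is the only point where some care is needed. I would deduce it from the compatibility of the multiplicative structure on $A^{*,*}(-)$ with $\T$-suspensions, as set up in \cite[Section~3]{An17}: the suspension isomorphism $\Sigma^n_\T\colon A^{*,*}(-)\rightiso A^{*,*}(-;\triv^{\oplus n}_S)$ is realized by the external cup product with $\Sigma^n_\T 1$, and the suspension isomorphisms compose as $\Sigma^b_\T\circ\Sigma^a_\T=\Sigma^{a+b}_\T$. Feeding in the first factor $\triv^{\oplus a}_S$ with class $\Sigma^a_\T 1$ and the second factor $\triv^{\oplus b}_S$ with class $\Sigma^b_\T 1$ then yields $\Sigma^{a+b}_\T 1$, completing the induction and establishing $\thc(\triv^{G,n}_S)=\Sigma^n_\T 1$.

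\emph{Over $X$.} Let $p\colon X\to S$ be the structure morphism. The pullback of a trivialized vector $G$-bundle is canonically isomorphic to the trivialized vector $G$-bundle, $p^*\triv^{G,n}_S\cong\triv^{G,n}_X$, so the isomorphism and pullback axioms~\ref{def:orientation_iso} and~\ref{def:orientation_pull} give $\thc(\triv^{G,n}_X)=p^A\thc(\triv^{G,n}_S)=p^A(\Sigma^n_\T 1)$. Finally, since the $\T$-suspension isomorphism is natural in the base and the unit $1\in A^{0,0}(S)$ pulls back to the unit $1\in A^{0,0}(X)$, one has $p^A(\Sigma^n_\T 1)=\Sigma^n_\T(p^A 1)=\Sigma^n_\T 1\in A^{0,0}(X;\triv^{G,n}_X)$, as required. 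The main obstacle, such as it is, is purely bookkeeping: keeping straight the canonical identifications of trivialized $G$-bundles and the compatibility of $\T$-suspensions with pullbacks and with the ring structure; everything else is a direct unwinding of Definition~\ref{def:orientation}.
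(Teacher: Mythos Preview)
Your proof is correct and follows essentially the same two-step strategy as the paper: first establish the claim over $S$ from the multiplicativity and normalization axioms, then pull back along the structure morphism $p\colon X\to S$ using the isomorphism and pullback axioms together with $\triv^{G,n}_X\cong p^*\triv^{G,n}_S$. The paper's proof is a two-sentence sketch; you have simply unpacked the induction on rank and made explicit the compatibility $q_1^A(\Sigma^a_\T 1)\cup q_2^A(\Sigma^b_\T 1)=\Sigma^{a+b}_\T 1$ that the paper leaves implicit in invoking property~\ref{def:orientation_mult}.
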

\begin{proof}
	For $X=S$ the claim follows from the properties~\ref{def:orientation_mult} and~\ref{def:orientation_norm} of Definition~\ref{def:orientation}. For a general $X$ the claim follows from $X=S$, properties~\ref{def:orientation_iso} and~\ref{def:orientation_pull} of Definition~\ref{def:orientation} and the canonical isomorphism $\triv^{G,n}_X\cong p^*\triv^{G,n}_S$ for the projection $p\colon X\to S$.
\end{proof}	

\begin{definition} \label{def:locally_standard}
	Let $A\in\SHk$ be a commutative ring spectrum and $\mcE$ be a rank $n$ vector $G$-bundle over $X\in\Smk$. We say that $\thc\in A^{0,0}(X;\mcE)$ is \textit{locally $G$-standard} if there exists a Zariski open cover $X=\bigcup_{i=1}^m U_i$ and isomorphisms of vector $G$-bundles $\theta_i\colon \mcE|_{U_i}\rightiso \triv^{G,n}_{U_i}$ such that $\thc|_{U_i}=\theta^A_{i} \Sigma_{\T}^n 1$. In the case of $G=\SLc$ following the convention of Definition~\ref{def:SLc} we abuse the notation writing $\theta^A_{i} \Sigma_{\T}^n 1$ where we should write $\underline{\theta}^A_{i} \Sigma_{\T}^n 1$ with $\theta_i=(\underline{\theta}_i,\hat{\theta}_i)\colon \mcE|_{U_i}\rightiso \triv^{\SLc,n}_{U_i}$ being the isomorphisms.
\end{definition}

\begin{lemma} \label{lem:pull_locally_standard}
	Let $A\in\SHk$ be a commutative ring spectrum, $\mcE$ be a rank $n$ vector $G$-bundle over $X\in\Smk$ and $\thc\in A^{0,0}(X;\mcE)$ be locally $G$-standard. Then for every morphism of smooth $S$-schemes $p\colon Y\to X$ the element $p^A(\thc) \in A^{0,0}(Y;\mcE)$ is locally $G$-standard.
\end{lemma}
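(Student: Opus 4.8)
The plan is to unwind the definition of locally $G$-standard and use the functoriality built into Definition~\ref{def:orientation}(\ref{def:orientation_pull}) together with the fact that pullbacks of vector $G$-bundles respect restriction to open subsets. Concretely, suppose $\thc \in A^{0,0}(X;\mcE)$ is locally $G$-standard, witnessed by a Zariski cover $X = \bigcup_{i=1}^m U_i$ and isomorphisms $\theta_i \colon \mcE|_{U_i} \rightiso \triv^{G,n}_{U_i}$ with $\thc|_{U_i} = \theta_i^A \Sigma_\T^n 1$. Given $p \colon Y \to X$, I would set $V_i = p^{-1}(U_i)$, so that $Y = \bigcup_{i=1}^m V_i$ is a Zariski cover, and let $p_i \colon V_i \to U_i$ be the restriction of $p$. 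The natural candidate witnessing local $G$-standardness of $p^A(\thc)$ is the cover $\{V_i\}$ together with the pulled-back isomorphisms $p_i^*\theta_i \colon p_i^*(\mcE|_{U_i}) = (p^*\mcE)|_{V_i} \rightiso p_i^*\triv^{G,n}_{U_i} = \triv^{G,n}_{V_i}$, using the canonical identification of the pullback of a trivialized $G$-bundle with the trivialized $G$-bundle on the target.

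The key step is the chain of equalities
\[
p^A(\thc)|_{V_i} = p_i^A(\thc|_{U_i}) = p_i^A(\theta_i^A \Sigma_\T^n 1) = (p_i^*\theta_i)^A (p_i^A \Sigma_\T^n 1) = (p_i^*\theta_i)^A \Sigma_\T^n 1.
\]
The first equality is the compatibility of pullback along $p$ with restriction to the open $V_i \subseteq Y$, i.e.\ the commutativity of the obvious square of schemes and the induced square of Thom spaces. The second is the hypothesis. The third is the contravariant functoriality of the assignment $\theta \mapsto \theta^A$, applied to the commuting square of total spaces relating $\theta_i$, $p_i^*\theta_i$ and the maps of pulled-back bundles; equivalently, it is property~(\ref{def:orientation_pull}) of Definition~\ref{def:orientation} applied in the relative setting over $X$. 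The last equality uses that $p_i^A$ sends the canonical suspension class $\Sigma_\T^n 1 \in A^{0,0}(U_i;\triv^{G,n}_{U_i})$ to $\Sigma_\T^n 1 \in A^{0,0}(V_i;\triv^{G,n}_{V_i})$, which is immediate from naturality of the suspension isomorphism $\Sigma^n_\T$ in the base and the compatibility of the chosen trivialization identifications with $p_i$.

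The only point requiring care — and the one I expect to be the main obstacle, though it is minor — is bookkeeping the identifications of pullbacks of trivialized $G$-bundles, particularly in the $G = \SLc$ case where, following the convention of Definition~\ref{def:locally_standard}, one suppresses the line-bundle component $\hat{\theta}_i$ of the isomorphism and writes $\theta_i^A$ for $\underline{\theta}_i^A$. One must check that the pullback $p_i^*\theta_i = (p_i^*\underline{\theta}_i,\, p_i^*\hat{\theta}_i)$ is again an isomorphism of vector $\SLc$-bundles onto $\triv^{\SLc,n}_{V_i}$ and that its underlying vector-bundle part agrees with $p_i^*\underline{\theta}_i$, so that the abuse of notation is consistent on both sides of the displayed equation. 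Once these identifications are fixed, all four equalities above are routine naturality statements, and assembling them over the cover $\{V_i\}$ of $Y$ exhibits $p^A(\thc)$ as locally $G$-standard.
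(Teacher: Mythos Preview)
Your proposal is correct and follows exactly the same route as the paper: pull back the trivializing cover to $Y=\bigcup p^{-1}(U_i)$, use the pulled-back trivializations $p^*\theta_i$, and verify $p^A(\thc)|_{p^{-1}(U_i)}=(p^*\theta_i)^A\Sigma_\T^n 1$ by naturality. The only minor remark is that your appeal to Definition~\ref{def:orientation}(\ref{def:orientation_pull}) is misplaced, since no orientation is assumed in this lemma; the third equality in your chain is pure functoriality of cohomology pullbacks along the commuting square of Thom spaces, not a property of Thom classes.
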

\begin{proof}
	Let $X=\bigcup_{i=1}^m U_i$ be a Zariski open cover and $\theta_i\colon \mcE|_{U_i}\rightiso \triv^{G,n}_{U_i}$ be isomorphisms of vector $G$-bundles such that $\thc|_{U_i}=\theta^A_{i} \Sigma_{\T}^n 1$. Then for the open cover $Y=\bigcup_{i=1}^m p^{-1}(U_i)$ and isomorphisms $p^*\theta_i\colon p^*\mcE|_{p^{-1}(U_i)}\rightiso \triv^{G,n}_{p^{-1}(U_i)}$ one has $\thc|_{p^{-1}(U_i)}=(p^*\theta_i)^A \Sigma_{\T}^n 1$.
\end{proof}

\begin{lemma} \label{lem:local_Thom}
	Let $A\in\SHk$ be a commutative ring spectrum and $\mcE$ be a rank $n$ vector $G$-bundle over $X\in\Smk$. Suppose that $\thc\in A^{0,0}(X;\mcE)$ is locally $G$-standard. Then 
	\[
	-\cup \thc\colon A^{*,*}(-)\to A^{*,*} (-;\mcE)
	\]
	is an isomorphism of presheaves on the slice category $\Smk/X$.
\end{lemma}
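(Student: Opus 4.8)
The plan is to reduce to the trivial case by the usual Mayer–Vietoris patching argument for Zariski covers. First I would fix the open cover $X=\bigcup_{i=1}^m U_i$ and the isomorphisms $\theta_i\colon \mcE|_{U_i}\rightiso \triv^{G,n}_{U_i}$ witnessing that $\thc$ is locally $G$-standard; by Lemma~\ref{lem:pull_locally_standard} the same property persists after pulling back along any $p\colon Y\to X$, so it is enough to prove the statement for $X$ itself, i.e. that the map of presheaves $-\cup\thc$ is an isomorphism when evaluated on every $Y\xrightarrow{p}X$. Since being an isomorphism of presheaves can be checked after restricting to each member of a Zariski cover of the source, and since the pullback of a locally $G$-standard class along an open immersion $U_i\hookrightarrow X$ is $\theta_i^A\Sigma^n_\T 1$ by construction, the key reduction is: $-\cup\thc$ is an isomorphism on $\Smk/X$ provided its restriction to each $\Smk/U_i$ is an isomorphism.

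On $U_i$ the class $\thc|_{U_i}=\theta_i^A\Sigma^n_\T 1$, and cupping with $\theta_i^A\Sigma^n_\T 1$ factors as the suspension isomorphism $\Sigma^n_\T\colon A^{*,*}(-)\rightiso A^{*,*}(-;\triv^{G,n}_{U_i})$ followed by the pullback isomorphism $\theta_i^A\colon A^{*,*}(-;\triv^{G,n}_{U_i})\rightiso A^{*,*}(-;\mcE|_{U_i})$; here I use that cupping with a pulled-back class commutes with the pullback, together with the fact that $\Sigma^n_\T 1$ acts by the suspension isomorphism — this is the standard identification of the Thom class of the trivial bundle with the $\T$-suspension, which is where the hypothesis that $A$ is a commutative ring spectrum and the compatibility of $\cup$ with suspension enters. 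Hence $-\cup\thc$ is an isomorphism over each $U_i$.

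The main obstacle — and the only nonformal point — is the descent step: knowing $-\cup\thc$ is a local isomorphism, conclude it is a global one. The cleanest way is to observe that $-\cup\thc$ is a natural transformation of presheaves on $\Smk/X$ and that the cohomology presheaves involved satisfy Nisnevich (hence Zariski) descent, being representable in $\SHk$; more concretely, for a two-term cover $X=U_1\cup U_2$ one has compatible Mayer–Vietoris long exact sequences for $A^{*,*}(-)$, for $A^{*,*}(-;\mcE)$, and for the restrictions to $U_1$, $U_2$, $U_1\cap U_2$, and $-\cup\thc$ furnishes a map between these sequences which is an isomorphism on the $U_1$, $U_2$ and $U_1\cap U_2$ terms (the last because $\thc|_{U_1\cap U_2}$ is again locally $G$-standard, being the restriction of a locally $G$-standard class, so the $U_i$ case applies); the five lemma then gives the isomorphism over $X$. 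An induction on the number $m$ of charts in the cover, replacing $U_1$ by $U_1\cup\cdots\cup U_{m-1}$ and using that its intersection with $U_m$ is covered by $m-1$ charts over which $\thc$ is locally standard, completes the argument. One subtlety to flag: the Mayer–Vietoris sequence for the twisted theory $A^{*,*}(-;\mcE)$ should be deduced from the one for $A$ applied to Thom spaces, using that $\Th(\mcE|_{U_1\cup U_2})$ is the homotopy pushout of $\Th(\mcE|_{U_1})$ and $\Th(\mcE|_{U_2})$ along $\Th(\mcE|_{U_1\cap U_2})$, which holds because Thom space formation takes Zariski (Nisnevich) squares to homotopy pushout squares.
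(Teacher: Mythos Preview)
Your proposal is correct and follows essentially the same route as the paper: reduce via Lemma~\ref{lem:pull_locally_standard} to the single scheme $X$, handle the trivialized base case by factoring $-\cup\thc|_{U_i}$ as $\theta_i^A\circ\Sigma^n_\T$, and then induct on the number of charts in the cover using the Mayer--Vietoris long exact sequences together with the five lemma. One expository wrinkle worth tightening: the clause ``being an isomorphism of presheaves can be checked after restricting to each member of a Zariski cover of the source'' is false for arbitrary presheaves and is not what you actually use---the real input is the Mayer--Vietoris/descent argument you spell out in the third paragraph, so you should drop or rephrase that sentence.
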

\begin{proof}
	In view of Lemma~\ref{lem:pull_locally_standard} it is sufficient to check that
	\[
	-\cup \thc\colon A^{*,*}(X)\to A^{*,*} (X;\mcE)
	\]
	is an isomorphism. The proof follows via a standard Mayer-Vietoris argument. 
	
	We argue by induction on the minimal possible size of the Zariski open cover $X=\bigcup_{i=1}^m U_i$ satisfying $\thc|_{U_i}=\theta^A_{i} \Sigma_{\T}^n 1$ for some isomorphisms of vector $G$-bundles $\theta_i\colon \mcE|_{U_i}\rightiso \triv^{G,n}_{U_i}$. First suppose that $m=1$, i.e. there exists a trivialization $\theta\colon \mcE\rightiso \triv^{G,n}_X$. Then $-\cup \thc$ is the composition of the isomorphisms 
	\[
	\theta^A\circ \Sigma^n_\T \colon A^{*,*}(X)\rightiso A^{*,*}(X; \triv^{G, n}_X)\rightiso A^{*,*}(X;\mcE).
	\]
	
	For the inductive step choose a Zariski open cover $X=\bigcup_{i=1}^m U_i$ and isomorphisms of vector $G$-bundles $\theta_i\colon \mcE|_{U_i}\rightiso \triv^{G,n}_{U_i}$ such that $\thc|_{U_i}=\theta^A_{i} \Sigma_{\T}^n 1$. Put $V= \bigcup_{i=1}^{m-1} U_i$ and $W=U_m$. Consider the morphism of Mayer-Vietoris long exact sequences induced by $-\cup \thc$.
	\[
	\xymatrix @C=1.7pc{
		\hdots \ar[r] & A^{*,*}(X) \ar[r] \ar[d]^{\cup \thc} & A^{*,*}(V)\oplus A^{*,*}(W) \ar[r] \ar[d]^{(\cup \thc|_{V}) \oplus (\cup\thc|_{W})} & A^{*,*}(V\cap W) \ar[r] \ar[d]^{\cup\thc|_{V\cap W}} & \hdots \\
		\hdots \ar[r] & A^{*,*}(X;\mcE) \ar[r] & A^{*,*}(V;\mcE)\oplus A^{*,*}(W;\mcE) \ar[r] & A^{*,*}(V\cap W;\mcE) \ar[r]  & \hdots	
	}
	\]
	The elements $\thc|_{V}$, $\thc|_{W}$ and $\thc|_{V\cap W}$ are locally $G$-standard for the covers $V=\bigcup_{i=1}^{m-1} U_i$, $W=U_m$ and $V\cap W=(\bigcup_{i=1}^{m-1} U_i)\cap U_m$ respectively whence the second and the third vertical homomorphisms are isomorphisms by the inductive assumption. It follows from the five lemma that the first vertical homomorphism is an isomorphism as well.
\end{proof}

\begin{corollary} \label{cor:Thomiso}
	Let $A\in\SHk$ be a $G$-oriented spectrum. Then for a rank $n$ vector $G$-bundle $\mcE$  over $X\in\Smk$ the homomorphism
	\[
	-\cup \thc(\mcE)\colon A^{*,*}(-)\to A^{*,*}(-;\mcE)
	\]
	is an isomorphism of presheaves on the slice category $\Smk/X$.
\end{corollary}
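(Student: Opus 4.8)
The plan is to reduce the statement to Lemma~\ref{lem:local_Thom} by verifying that the Thom class $\thc(\mcE)$ supplied by the $G$-orientation is locally $G$-standard in the sense of Definition~\ref{def:locally_standard}. Indeed, Lemma~\ref{lem:local_Thom} already does the substantive work: it establishes via the Mayer--Vietoris induction that \emph{any} locally $G$-standard element of $A^{0,0}(X;\mcE)$ induces an isomorphism of presheaves on $\Smk/X$ upon cupping. So there is nothing left to prove once local $G$-standardness of $\thc(\mcE)$ is in hand.

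To check local $G$-standardness, first invoke Lemma~\ref{lem:locallytrivial} to choose a finite Zariski open cover $X=\bigcup_{i=1}^m U_i$ together with isomorphisms of vector $G$-bundles $\theta_i\colon \mcE|_{U_i}\rightiso \triv^{G,n}_{U_i}$. Applying property~\ref{def:orientation_pull} of Definition~\ref{def:orientation} to the open immersions $U_i\hookrightarrow X$ gives $\thc(\mcE)|_{U_i}=\thc(\mcE|_{U_i})$; applying property~\ref{def:orientation_iso} to the isomorphisms $\theta_i$ gives $\thc(\mcE|_{U_i})=\theta_i^A\,\thc(\triv^{G,n}_{U_i})$; and Lemma~\ref{lem:trivialThom} identifies $\thc(\triv^{G,n}_{U_i})=\Sigma^n_\T 1$. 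Combining these three identities yields $\thc(\mcE)|_{U_i}=\theta_i^A\,\Sigma^n_\T 1$, which is exactly the defining condition for $\thc(\mcE)$ to be locally $G$-standard with respect to the chosen cover.

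With this in place, Lemma~\ref{lem:local_Thom} applied to $\thc=\thc(\mcE)$ immediately gives that $-\cup\thc(\mcE)\colon A^{*,*}(-)\to A^{*,*}(-;\mcE)$ is an isomorphism of presheaves on $\Smk/X$, completing the argument. There is no real obstacle here: the only point requiring care is bookkeeping with the $\SLc$ notational convention flagged in Definition~\ref{def:locally_standard} (writing $\theta_i^A$ for $\underline{\theta}_i^A$), but this is purely cosmetic and is handled uniformly by the same three-step identity above, since the pullback on twisted cohomology only sees the underlying vector bundle isomorphism.
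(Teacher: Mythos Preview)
Your proof is correct and follows essentially the same approach as the paper's own proof: invoke Lemma~\ref{lem:locallytrivial} to obtain a trivializing cover, use properties~\ref{def:orientation_iso} and~\ref{def:orientation_pull} of Definition~\ref{def:orientation} together with Lemma~\ref{lem:trivialThom} to verify that $\thc(\mcE)$ is locally $G$-standard, and then conclude by Lemma~\ref{lem:local_Thom}. The only difference is that you spell out the three-step chain $\thc(\mcE)|_{U_i}=\thc(\mcE|_{U_i})=\theta_i^A\thc(\triv^{G,n}_{U_i})=\theta_i^A\Sigma^n_\T 1$ explicitly, whereas the paper compresses this into a single sentence.
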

\begin{proof}
	Applying Lemma~\ref{lem:locallytrivial} we may choose a covering $X=\bigcup_{i=1}^m U_i$ such that $\mcE|_{U_i}$ is trivial for every $i$. Lemma~\ref{lem:trivialThom} combined with the properties~\ref{def:orientation_iso} and~\ref{def:orientation_pull} of Definition~\ref{def:orientation} yields that $\thc(\mcE)\in A^{0,0}(X;\mcE)$ is locally $G$-standard for the chosen cover and for every choice of trivializations $\theta_i\colon \mcE|_{U_i}\rightiso \triv^{G,n}_{U_i}$. The claim follows by Lemma~\ref{lem:local_Thom}.
\end{proof}	
\begin{remark}
	A (non-normalized) \textit{$G$-orientation} of a commutative ring spectrum $A\in\SHk$ is a normalized $G$-orientation in the sense of Definition~\ref{def:orientation} with the property~\ref{def:orientation_norm} replaced with
	\begin{enumerate}
		\item[(4$'$)]	For every vector $G$-bundle $\mcE$  over $X\in\Smk$ the homomorphism
		\[
		-\cup \thc(\mcE)\colon A^{*,*}(X)\to A^{*,*}(X;\mcE)
		\]
		is an isomorphism.
	\end{enumerate}
	Corollary~\ref{cor:Thomiso} yields that a normalized $G$-orientation is a $G$-orientation in the above sense. On the other hand, a $G$-orientation with Thom classes $\thc(\mcE)$ gives rise to a normalized $G$-orientation as follows.
	
	$G=\GL,\SL$ or $\SLc$. Put $\alpha=\Sigma_\T^{-1}\thc(\triv^{G,1}_S)\in A^{0,0}(S)$. Property~{(4$'$)} yields that 
	\[
	A^{0,0}(S) \cup \alpha=A^{0,0}(S),
	\]
	i.e. that $\alpha$ is an invertible element of $A^{0,0}(S)$. Put $\widetilde{\thc}(\mcE)=\alpha^{-n} \cup \thc(\mcE)$ for a rank $n$ vector $G$-bundle $\mcE$. This rule defines a  normalized $G$-orientation.
	
	$G=\Sp$. Put $\alpha=\Sigma_\T^{-2}\thc(\triv^{G,2}_S)\in A^{0,0}(S)$. As above, property~{(4$'$)} yields that $\alpha$ is invertible in $A^{0,0}(S)$. Put $\widetilde{\thc}(\mcE)=\alpha^{-n} \cup \thc(\mcE)$ for a rank $2n$ symplectic bundle $\mcE$. This rule defines a normalized symplectic orientation.
\end{remark}

\section{$\mathbf{SL^c}$ Thom isomorphisms for $\mathbf{SL}$-oriented cohomology} \label{section:SLcSL}

\begin{lemma}[{cf. \cite[Lemma~2]{An16b}}] \label{lem:dualThom}
	Let $E$ be a vector bundle over $X\in \Smk$ and $L$ be a line bundle over $X$. Then there exists an isomorphism 
	\[
	\rho\colon \Th(E\oplus L)\rightiso \Th(E\oplus L^\vee)
	\]
	in the motivic unstable homotopy category $\mathcal{H}_\bullet (S)$.
\end{lemma}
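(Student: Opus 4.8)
The plan is to peel off the line--bundle direction and reduce everything to the case $E=0$, working in the pointed $\A^1$--homotopy category $\Hp(\Smk/X)$ of smooth $X$--schemes and only pushing forward to $\Hp(S)$ at the very end. There are two ingredients. First, the standard identification $\Th(E\oplus L)\cong \Th(E)\wedge_X\Th(L)$, where $\wedge_X$ denotes the fibrewise smash product over $X$ (coming from $\mathbb{V}(E\oplus L)=\mathbb{V}(E)\times_X\mathbb{V}(L)$), and likewise with $L^\vee$. Second, an $\A^1$--equivalence $\Th(L)\simeq_X\Th(L^\vee)$ of pointed $X$--spaces. Granting the second point, one smashes it over $X$ with $\Th(E)$; since all the spaces occurring are cofibrant (a Thom space is a quotient of representables by an open immersion, hence cofibrant), this preserves $\A^1$--equivalences, so $\Th(E\oplus L)\simeq_X\Th(E\oplus L^\vee)$, and applying the forgetful functor $\Hp(\Smk/X)\to\Hp(S)$ yields the desired isomorphism $\rho$.

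So the real work is to construct $\Th(L)\simeq_X\Th(L^\vee)$. Write $L$ also for the total space of the line bundle, $s\colon X\hookrightarrow L$ for the zero section, and $L^\circ=L\smallsetminus s(X)$ for its complement, so that $\Th(L)=L/L^\circ$. The section $s$ is an $\A^1$--equivalence, being a section of the vector bundle projection $\pi\colon L\to X$; hence from the cofibre sequence $L^\circ_+\to L_+\to\Th(L)$ and $L_+\xrightarrow{\simeq}X_+$ one gets an $X$--equivalence $\Th(L)\simeq_X\Cone\bigl(L^\circ_+\xrightarrow{\pi_+}X_+\bigr)$, and the same for $L^\vee$. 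It therefore suffices to exhibit an isomorphism $L^\circ\rightiso(L^\vee)^\circ$ of $X$--schemes commuting with the projections to $X$. But $L^\circ$ is the $\Gm$--torsor $\iHom(\triv_X,L)^\circ=\underline{\mathrm{Isom}}_X(\triv_X,L)$ of trivialisations of $L$, and $(L^\vee)^\circ=\underline{\mathrm{Isom}}_X(\triv_X,L^\vee)=\underline{\mathrm{Isom}}_X(L,\triv_X)$; the inversion $f\mapsto f^{-1}$ is the required isomorphism over $X$ (it is an involution and manifestly lies over $X$). This gives $\Cone(L^\circ_+\to X_+)\cong_X\Cone((L^\vee)^\circ_+\to X_+)$, hence $\Th(L)\simeq_X\Th(L^\vee)$, and with it the lemma.

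I do not expect a genuine obstacle here; the argument is essentially formal once one spots the torsor--inversion trick. The only point that needs care is the bookkeeping of structure maps to $X$: one must check that after collapsing $L$ to $X$ in the cofibre sequence the residual map $L^\circ_+\to X_+$ is exactly the torsor projection $\pi_+$, so that the inversion isomorphism of $\Gm$--torsors really does match the two mapping cones \emph{over} $X$, and that the ensuing equivalence $\Th(L)\simeq_X\Th(L^\vee)$ is $X$--equivariant enough for the smash product with $\Th(E)$ to carry it to an $\A^1$--equivalence. Staying inside $\Hp(\Smk/X)$ until the last step makes all of this routine.
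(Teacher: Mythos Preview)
Your proof is correct and rests on the same key observation as the paper: the complement of the zero section $L^\circ$ is canonically isomorphic to $(L^\vee)^\circ$ as an $X$-scheme. Your ``torsor inversion'' $f\mapsto f^{-1}$ is literally the paper's map $v\mapsto(\phi\text{ with }\phi(v)=1)$, just phrased differently.

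The packaging, however, differs. The paper avoids the relative category $\Hp(\Smk/X)$ and the fibrewise smash product entirely: it realises $\Th(E\oplus L)$ directly in $\Hp(S)$ as the total cofibre of the square
\[
\xymatrix{
E^o\times_X L^o \ar[r] \ar[d] & L^o \ar[d]\\
E^o \ar[r] & X
}
\]
(coming from the fact that $(E\oplus L)^o\simeq E^o\cup_{E^o\times_X L^o}L^o$ after contracting the contractible fibre directions), and then the scheme isomorphism $L^o\rightiso(L^\vee)^o$ yields an isomorphism of such squares, hence of their total cofibres. This is more self-contained and sidesteps the cofibrancy and fibrewise-smash bookkeeping you flag at the end. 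Your route, on the other hand, isolates the rank-one case $\Th(L)\simeq\Th(L^\vee)$ cleanly and then tensors up, which makes the role of the line bundle more transparent and would generalise more readily if one wanted to iterate the construction. Both arguments are short; the paper's is marginally lighter on prerequisites.
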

\begin{proof}
	Let $E^o$, $L^o$ and $(L^\vee)^o$ be the complements to the zero sections of the respective bundles. The Thom spaces $\Th(E\oplus L)$ and $\Th(E\oplus L^\vee)$ can be realized as the total cofibers of the following diagrams.
	\[
	\xymatrix{
		E^o\times_X L^o \ar[r] \ar[d] & L^o \ar[d]\\
		E^o \ar[r] & X
	}
	\qquad
	\xymatrix{
		E^o\times_X (L^\vee)^o \ar[r] \ar[d] & (L^\vee)^o \ar[d]\\
		E^o \ar[r] & X
	}
	\]
	Here all the morphisms are the projections. There exists a unique isomorphism of $X$-schemes $L^o\to (L^\vee)^o$ that on the fibers takes a vector $v$ to the functional $f$ such that $f(v)=1$. This isomorphism gives rise to a morphism of the above diagrams inducing the isomorphism $\rho$ on the total cofibers.
\end{proof}

\begin{remark}
	It is clear that the isomorphism $\rho=\rho_{X,E,L}$ constructed in Lemma~\ref{lem:dualThom} is functorial in $X$, i.e. that for $Y\in\Smk$ and a regular morphism $p\colon Y\to X$ the following diagram commutes.
	\[
	\xymatrix @C=4.5pc{
		Th(p^*E\oplus p^*L)\ar[r]^{\rho_{Y,p^*E,p^*L}} \ar[d]_{p_{E\oplus L}}  & \Th(p^*E\oplus p^*L^\vee)  \ar[d]^{p_{E\oplus L^\vee}} \\
		Th(E\oplus L)\ar[r]^{\rho_{X,E,L}} & \Th(E\oplus L^\vee)
	}
	\]
	Here $p_{E\oplus L}$ and $p_{E\oplus L^\vee}$ are induced by the canonical morphisms of total spaces $p^*E\oplus p^*L \to E\oplus L$ and $p^*E\oplus p^*L^\vee\to E\oplus L^\vee$.
\end{remark}

\begin{theorem} \label{thm:SLvsSLc}
	Let $A\in\SHk$ be an $SL$-oriented spectrum and $\mcE=(E,L,\lambda)$ be a rank $n$ vector $SL^c$-bundle over $X\in\Smk$. Then there exists an isomorphism 
	\[
	A^{*,*}(-)\rightiso A^{*,*}(-;E)
	\]
	of presheaves on the slice category $\Smk/X$.
\end{theorem}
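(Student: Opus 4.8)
The plan is to reduce the $SL^c$ Thom isomorphism to the $SL$ Thom isomorphism already available via Corollary~\ref{cor:Thomiso}, using Lemma~\ref{lem:dualThom} to cancel the twist by the square of the line bundle $L$. The key observation is that an $SL^c$-bundle $\mcE=(E,L,\lambda)$ carries the data $\lambda\colon\det E\rightiso L^{\otimes 2}$, and if we add a copy of $L^\vee$ to $E$ we obtain a vector bundle $E\oplus L^\vee$ whose determinant $\det E\otimes L^\vee\cong L^{\otimes 2}\otimes L^\vee\cong L$ is still not trivial; so instead I would add $L\oplus L^\vee$ (or, more symmetrically, note that $\det(E\oplus L^\vee\oplus L^\vee)\cong L^{\otimes 2}\otimes (L^\vee)^{\otimes 2}$ is canonically trivial). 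Concretely, set $E'=E\oplus L^\vee$; then $\lambda$ together with the canonical pairing $L\otimes L^\vee\cong\triv_X$ equips $E'$ with an isomorphism $\det E'\rightiso L$, and adding one more $L^\vee$ gives $E''=E\oplus L^\vee\oplus L^\vee$ with a canonical trivialization $\lambda''\colon\det E''\rightiso\triv_X$, i.e. a genuine vector $SL$-bundle $\mcE''=(E'',\lambda'')$.

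Next I would assemble the chain of isomorphisms of presheaves on $\Smk/X$. On one side, Corollary~\ref{cor:Thomiso} applied to the $SL$-oriented spectrum $A$ and the vector $SL$-bundle $\mcE''$ gives
\[
A^{*,*}(-)\rightiso A^{*,*}(-;E'')=A^{*,*}(-;E\oplus L^\vee\oplus L^\vee).
\]
On the other side, Lemma~\ref{lem:dualThom} (together with the functoriality spelled out in the subsequent Remark, so that the induced isomorphism is one of presheaves on $\Smk/X$) applied with the bundle $E\oplus L^\vee$ in the role of ``$E$'' and $L^\vee$ in the role of ``$L$'' yields
\[
\Th\big(p^*(E\oplus L^\vee)\oplus p^*L^\vee\big)\;\cong\;\Th\big(p^*(E\oplus L^\vee)\oplus p^*L\big)
\]
functorially in $[Y\xrightarrow{p}X]$, hence an isomorphism of twisted cohomology presheaves $A^{*,*}(-;E\oplus L^\vee\oplus L^\vee)\rightiso A^{*,*}(-;E\oplus L^\vee\oplus L)$. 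Since $L^\vee\oplus L\cong\triv_X^{\oplus 2}$ canonically, the target is $A^{*,*}(-;E\oplus\triv_X^{\oplus 2})$, which is identified with $A^{*,*}(-;E)$ via the suspension isomorphism $\Sigma^{2}_\T$ (up to the usual shift in indices, which is already built into the definition of twisted cohomology). Composing, $A^{*,*}(-)\rightiso A^{*,*}(-;E)$ as presheaves on $\Smk/X$.

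The main point to be careful about is bookkeeping rather than a genuine obstacle: one must check that every isomorphism in the chain is natural with respect to pullback along morphisms $Y\to X$, so that it is an isomorphism of presheaves on the slice category and not merely a pointwise isomorphism; for the Thom-space identification this is exactly the content of the Remark following Lemma~\ref{lem:dualThom}, and for the $SL$ Thom isomorphism it is built into Corollary~\ref{cor:Thomiso}. A secondary subtlety is that the isomorphism produced this way depends a priori on the auxiliary choices ($E\oplus L^\vee\oplus L^\vee$ versus other stabilizations, and the canonical pairings used); this is harmless for the mere existence statement asserted in the theorem, but it foreshadows why one wants the more refined discussion—patching Thom classes from a trivializing cover as in Section~\ref{section:orientsheaf}—to get a canonical and multiplicative $SL^c$-orientation. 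Note also that reducing to $E\oplus L^\vee\oplus L^\vee$ rather than $E\oplus L\oplus L^\vee$ directly is convenient because it keeps the extra summand's structure group inside $SL$; alternatively one may invoke Lemma~\ref{lem:dualThom} once with $E$ and $L$ and separately trivialize, but the two-step route above makes the naturality transparent.
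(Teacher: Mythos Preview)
Your approach matches the paper's up to the last step, but there is a genuine gap at that point. You write that ``$L^\vee\oplus L\cong\triv_X^{\oplus 2}$ canonically'' and then invoke the suspension isomorphism $\Sigma^2_\T$. This is false: it is the \emph{tensor product} $L^\vee\otimes L$ that is canonically trivial, not the direct sum. If $L$ is a nontrivial line bundle (e.g.\ $\mathcal{O}(1)$ on $\PP^1$), then $L\oplus L^\vee$ is not isomorphic to $\triv_X^{\oplus 2}$ as a vector bundle, and no suspension isomorphism is available to strip it off.

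The paper's proof carries out your first two steps verbatim, but for the third step it observes instead that $(L^\vee\oplus L,\lambda_{ev})$ is a vector $\SL$-bundle, where $\lambda_{ev}\colon\det(L^\vee\oplus L)\rightiso L^\vee\otimes L\xrightarrow{ev}\triv_X$ is induced by the evaluation pairing. One then applies the $\SL$ Thom isomorphism a \emph{second} time: cup product with $\thc(L^\vee\oplus L,\lambda_{ev})$ gives an isomorphism $A^{*,*}(-;E)\rightiso A^{*,*}(-;E\oplus L^\vee\oplus L)$ of presheaves on $\Smk/X$ (the paper cites \cite[Lemma~3]{An16b} here). Inverting this and composing with your first two maps yields the claimed isomorphism. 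So your overall strategy is correct; the fix is simply to replace the mistaken trivialization of $L^\vee\oplus L$ by a second invocation of the $\SL$ Thom isomorphism for this rank~$2$ bundle.
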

\begin{proof}
	The isomorphism
	$
	A^{*,*}(X)\rightiso A^{*,*}(X;E)
	$	
	is given by the following chain of isomorphisms:
	\[
	A^{*,*}(X)\xrightarrow{\phi_1} A^{*,*}(X;E\oplus L^\vee\oplus L^\vee) \xrightarrow{\psi} A^{*,*}(X;E\oplus L^\vee\oplus L) \xrightarrow{\phi_2} A^{*,*}(X;E).
	\]
	Here
	\begin{itemize}
		\item 
		$\phi_1=-\cup \thc(\widetilde{\mcE})$ is given by the cup-product with the Thom class for the rank $n+2$ vector $\SL$-bundle $\widetilde{\mcE}=(E\oplus L^\vee\oplus L^\vee, \widetilde{\lambda})$. The isomorphism $\widetilde{\lambda}\colon \det(E\oplus L^\vee\oplus L^\vee)\rightiso \triv_Y$ is given by the composition of the isomorphisms
		$
		\det(E\oplus L^\vee\oplus L^\vee)\rightiso \det E\otimes (L^\vee)^{\otimes 2}\xrightarrow{\lambda\otimes \id} L^{\otimes 2} \otimes (L^\vee)^{\otimes 2} \rightiso \triv_X.
		$
		\item 
		$\psi=\rho^A$ is the pullback along the isomorphism 
		\[
		\rho\colon \Th(E\oplus L^\vee\oplus L)\rightiso \Th(E\oplus L^\vee\oplus L^\vee)
		\]
		given by Lemma~\ref{lem:dualThom}.
		\item
		$\phi_2=\widetilde{\phi}_2^{-1}$ is the inverse to the isomorphism
		\[
		\widetilde{\phi}_2\colon A^{*,*}(X;E)\xrightarrow{-\cup \thc(L^\vee\oplus L,\lambda_{ev})} A^{*,*}(X;E\oplus L^\vee\oplus L)
		\]
		given by the cup-product with the Thom class for the rank $2$ vector $\SL$-bundle $(L^\vee\oplus L,\lambda_{ev})$ where $\lambda_{ev}\colon \det (L^\vee\oplus L)\rightiso L^\vee\otimes L\xrightarrow{ev} \triv_X$ is induced by the evaluation homomorphism. Note that $\widetilde{\phi}_2$ is an isomorphism by \cite[Lemma~3]{An16b}.
	\end{itemize}
	It is clear that the construction is functorial in $X$ whence the claim.
\end{proof}

\begin{remark}
	Let $A\in\SHk$ be an $\SL$-oriented spectrum. Theorem~\ref{thm:SLvsSLc} allows one to define a Thom class for an $\SLc$-bundle $\mcE$ over $X\in \Smk$ as the image of $1\in A^{0,0}(X)$ under the isomorphism $A^{0,0}(X)\rightiso A^{0,0}(X;\mcE)$. A natural question is whether this rule gives rise to an $\SLc$-orientation of $A$ and if so, what are the relations between this $\SLc$-orientation and the original $\SL$-orientation. Unfortunately, while functoriality and normalization (properties \ref{def:orientation_iso}, \ref{def:orientation_pull} and \ref{def:orientation_norm} of Definition~\ref{def:orientation}) are straightforward, multiplicativity (property \ref{def:orientation_mult} of Definition~\ref{def:orientation}) seems to be more involved and at the moment it is not clear whether this property holds. We leave these questions to future investigations.
\end{remark}

\section{Orienting $\mathbf{A}$ when $\mathbf{A^{0,0}(-)}$ is a sheaf} \label{section:orientsheaf}

\begin{lemma} \label{lem:orient_unique}
	Let $A\in\SHk$ be a commutative ring spectrum and suppose that $A^{0,0}(-)$ is a sheaf in the Zariski topology. Then $A$ admits at most one normalized $G$-orientation.
\end{lemma}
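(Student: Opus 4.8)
The plan is to show that if $\thc$ and $\thc'$ are two normalized $G$-orientations of $A$, then $\thc(\mcE)=\thc'(\mcE)$ for every rank $n$ vector $G$-bundle $\mcE$ over every $X\in\Smk$. The idea is to compare the two Thom classes after restricting to a trivializing cover, where the normalization axiom forces them to agree, and then to glue this local agreement back up using the sheaf hypothesis.

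First I would fix $\mcE$ over $X$ and, by Lemma~\ref{lem:locallytrivial}, choose a Zariski open cover $X=\bigcup_i U_i$ together with isomorphisms of vector $G$-bundles $\theta_i\colon\mcE|_{U_i}\rightiso\triv^{G,n}_{U_i}$. Write $f_i\colon U_i\to X$ for the inclusions. Property~\ref{def:orientation_pull} of Definition~\ref{def:orientation} gives $f_i^A\thc(\mcE)=\thc(f_i^*\mcE)=\thc(\mcE|_{U_i})$, and then property~\ref{def:orientation_iso} combined with Lemma~\ref{lem:trivialThom} gives $\thc(\mcE|_{U_i})=\theta_i^A\thc(\triv^{G,n}_{U_i})=\theta_i^A\Sigma^n_\T 1$. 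The identical computation applies to $\thc'$, so
\[
f_i^A\thc(\mcE)=\theta_i^A\Sigma^n_\T 1=f_i^A\thc'(\mcE)\qquad\text{for every }i.
\]
Thus $\thc(\mcE)$ and $\thc'(\mcE)$ are two elements of $A^{0,0}(X;\mcE)$ agreeing after restriction to each member of the cover $\{f_i\}$; in particular both are locally $G$-standard in the sense of Definition~\ref{def:locally_standard}.

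It remains to conclude $\thc(\mcE)=\thc'(\mcE)$, i.e.\ to check that the presheaf $A^{0,0}(-;\mcE)$ on $\Smk/X$ is separated. Since $A$ is $G$-oriented by $\thc$, Corollary~\ref{cor:Thomiso} (or, directly, Lemma~\ref{lem:local_Thom} applied to the locally $G$-standard class $\thc(\mcE)$) supplies an isomorphism of presheaves on the slice category $\Smk/X$
\[
-\cup\thc(\mcE)\colon A^{0,0}(-)\rightiso A^{0,0}(-;\mcE).
\]
Being an isomorphism of presheaves on $\Smk/X$, it is natural and hence intertwines the restriction maps along the $f_i$. The presheaf $[Y\xrightarrow{p}X]\mapsto A^{0,0}(Y)$ on $\Smk/X$ is a Zariski sheaf, because $A^{0,0}(-)$ is a Zariski sheaf on $\Smk$ and a Zariski covering of an object $[Y\xrightarrow{p}X]$ in $\Smk/X$ is the same datum as a Zariski covering of $Y$; therefore $A^{0,0}(-;\mcE)$ is a Zariski sheaf on $\Smk/X$ as well, and in particular the restriction map $A^{0,0}(X;\mcE)\to\prod_i A^{0,0}(U_i;\mcE)$ is injective. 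Since $\thc(\mcE)$ and $\thc'(\mcE)$ have the same image under this map, they coincide. As $X$ and $\mcE$ were arbitrary, $\thc=\thc'$.

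The only step needing care is the transfer of separatedness from $A^{0,0}(-)$ to the twisted presheaf $A^{0,0}(-;\mcE)$: this is exactly where the Thom isomorphism of presheaves over the slice category (Corollary~\ref{cor:Thomiso}) enters, and one must keep track of the fact that it is natural, so that it intertwines the restriction maps actually used in the gluing. Everything else is a direct unwinding of the axioms of Definition~\ref{def:orientation}.
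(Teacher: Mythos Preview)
Your proof is correct and follows essentially the same approach as the paper: use Corollary~\ref{cor:Thomiso} to transport the sheaf property from $A^{0,0}(-)$ to the twisted presheaf $A^{0,0}(-;\mcE)$, then observe that both Thom classes restrict to $\theta_i^A\Sigma^n_\T 1$ on a trivializing cover by Lemma~\ref{lem:trivialThom} and the axioms, and conclude by separatedness. The paper's proof differs only in presentation order (it establishes the sheaf property first, then checks local agreement).
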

\begin{proof}
	Suppose that $A$ admits normalized $G$-orientations with Thom classes $\thc(\mcE)$ and $\widetilde{\thc}(\mcE)$. Corollary~\ref{cor:Thomiso} yields that for a rank $n$ vector $G$-bundle $\mcE$ over $X\in\Smk$ the homomorphism
	\[
	-\cup \thc(\mcE)\colon A^{0,0}(-)\to A^{0,0}(-;\mcE)
	\]
	is an isomorphism of presheaves on the slice category $\Smk/X$ whence $A^{0,0}(-;\mcE)$ is a sheaf on the small Zariski site of $X$. Apply Lemma~\ref{lem:locallytrivial} and choose a cover $X=\bigcup_{i=1}^m U_i$ together with trivializations $\theta_i\colon \mcE|_{U_i}\rightiso \triv^{G,n}_{U_i}$. Lemma~\ref{lem:trivialThom} combined with the properties~\ref{def:orientation_iso} and~\ref{def:orientation_pull} of Definition~\ref{def:orientation} yields that 
	\[
	\thc(\mcE)|_{U_i}=\thc(\mcE|_{U_i})=\theta_i^A\Sigma_\T^n 1= \widetilde{\thc}(\mcE|_{U_i})=\widetilde{\thc}(\mcE)|_{U_i}
	\]
	for every $i$. Thus $\thc(\mcE)$ and $\widetilde{\thc}(\mcE)$ coincide locally. We have already shown that $A^{0,0}(-;\mcE)$ is a sheaf whence $\thc(\mcE)=\widetilde{\thc}(\mcE)$
\end{proof}

\begin{lemma} \label{lem:unique_local}
	Let $\mcE$ be a rank $n$ vector $SL^c$-bundle over $X\in\Smk$, let $A\in \SHk$ be a commutative ring spectrum and suppose that $A^{0,0}(-)$ restricts to a sheaf on the small Zariski site of $X$. Then there exists at most one locally $SL^c$-standard element in $A^{0,0}(X;\mcE)$.
\end{lemma}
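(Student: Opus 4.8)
The plan is to reduce the uniqueness to the sheaf property of $A^{0,0}(-)$, after showing that any two locally $\SLc$-standard elements agree Zariski-locally on $X$.

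So suppose $A^{0,0}(X;\mcE)$ contains two locally $\SLc$-standard elements $\thc$ and $\widetilde{\thc}$ (if it contains none there is nothing to prove). Since at least one such element exists, Lemma~\ref{lem:local_Thom} shows that cup product with it identifies $A^{0,0}(-;\mcE)$ with $A^{0,0}(-)$ as presheaves on the small Zariski site of $X$, so $A^{0,0}(-;\mcE)$ is a sheaf there; it therefore suffices to produce a Zariski open cover of $X$ on whose members $\thc$ and $\widetilde{\thc}$ coincide. Pick covers $X=\bigcup_i U_i$ and $X=\bigcup_j V_j$ together with trivializations $\theta_i\colon\mcE|_{U_i}\rightiso\triv^{\SLc,n}_{U_i}$ and $\widetilde{\theta}_j\colon\mcE|_{V_j}\rightiso\triv^{\SLc,n}_{V_j}$ witnessing that $\thc$, resp. $\widetilde{\thc}$, is locally $\SLc$-standard. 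Fixing $i,j$ and writing $W=U_i\cap V_j$, the composite $g=\widetilde{\theta}_j|_W\circ(\theta_i|_W)^{-1}$ is an automorphism of $\triv^{\SLc,n}_W$; by Definition~\ref{def:SLc} its underlying vector bundle automorphism $M$ (a morphism $W\to\GL_n$) has $\det M$ equal to the square of the unit $\hat g$ coming from the line bundle part of $g$. Restricting the defining identities gives $\thc|_W=(\theta_i|_W)^A\,\Sigma^n_\T 1$ and $\widetilde{\thc}|_W=(\widetilde{\theta}_j|_W)^A\,\Sigma^n_\T 1=(\theta_i|_W)^A\bigl(M^A\,\Sigma^n_\T 1\bigr)$, where $M^A$ is the automorphism of $A^{0,0}(W;\triv^{\oplus n}_W)$ induced by $M$ on Thom spaces. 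Hence $\thc$ and $\widetilde{\thc}$ agree over any open $W'\subseteq W$ over which $M^A\,\Sigma^n_\T 1=\Sigma^n_\T 1$, and such $W'$ (taken around each point, over all pairs $i,j$) cover $X$. So it remains to prove: if $M\colon W\to\GL_n$ is a morphism whose determinant is Zariski-locally on $W$ the square of a unit, then every point of $W$ has a neighborhood over which $M^A$ fixes $\Sigma^n_\T 1$.

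To prove this, note first that $A^{*,*}(W;\triv^{\oplus n}_W)$ is free of rank one over $A^{*,*}(W)$ on $\Sigma^n_\T 1$ via the suspension isomorphism, and that, since $M$ is a morphism over $W$, the endomorphism $M^A$ is $A^{*,*}(W)$-linear, hence multiplication by a unit $u_M$ of $A^{0,0}(W)$; moreover $M\mapsto u_M$ is multiplicative. Now fix a point of $W$, pass to a small affine neighborhood over which $M$ is a matrix with $\det M=s^2$ for a unit $s$, and factor $M=M_0\cdot\mathrm{diag}(s^2,1,\dots,1)$ with $M_0$ of determinant $1$; then $u_M=u_{M_0}\,u_s^{2}$, where $u_s$ records the action of multiplication by $s$ on $\Th(\triv)$ (the remaining trivial summands contributing $1$ by multiplicativity). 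For the factor $M_0$: since $\SL_n$ over a local ring is generated by elementary matrices, after further shrinking the neighborhood $M_0$ is a product of matrices $e_{kl}(a)=\id+aE_{kl}$; each of these is connected to the identity by the morphism $\A^1\to\SL_n$, $t\mapsto e_{kl}(ta)$, which induces an $\A^1$-homotopy between the corresponding self-map of $\Th(\triv^{\oplus n})$ and the identity in $\Hp(S)$, whence $u_{e_{kl}(a)}=1$ and so $u_{M_0}=1$ on that neighborhood. For the factor $\mathrm{diag}(s^2,1,\dots,1)$: apply Lemma~\ref{lem:dualThom} with $E=0$ and $L=\triv$ to obtain a self-equivalence of $\Th(\triv)$ which is induced fibrewise by inversion, hence a morphism over the base, and which intertwines multiplication by $s$ with multiplication by $s^{-1}$; passing to cohomology and using its linearity over the base yields $u_s=u_{s^{-1}}$, and combined with $u_su_{s^{-1}}=1$ this forces $u_s^2=1$, with no further localization needed. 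Altogether $u_M=u_{M_0}\,u_s^2=1$ on a neighborhood of the chosen point, which is exactly what was required.

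I expect the second factor to be the main obstacle: the $\SL_n$-part is soft, resting only on the generation of $\SL_n$ over local rings by elementary matrices and on $\A^1$-invariance, whereas the identity $u_s^2=1$ — the cancellation of a twist by a square — is the genuinely motivic point and relies precisely on the special feature $\Th(L)\cong\Th(L^\vee)$ of Thom spaces of line bundles recorded in Lemma~\ref{lem:dualThom}. (Note that none of this argument uses an orientation on $A$, consistently with the hypotheses.)
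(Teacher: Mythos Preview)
Your proof is correct and follows the same architecture as the paper's: reduce to the sheaf property via Lemma~\ref{lem:local_Thom}, pass to a common refinement of the two trivializing covers, factor the transition automorphism as an $\SL_n$-matrix times $\mathrm{diag}(s^2,1,\dots,1)$, kill the $\SL_n$-part Zariski-locally via elementary matrices and $\A^1$-homotopy, and kill the square. The only difference is in the last step: the paper cites \cite[Lemma~5]{An16b} for the fact that multiplication by a square acts trivially on $\Th(\triv)$, whereas you reprove this cohomologically via Lemma~\ref{lem:dualThom}; these are the same idea (the inversion isomorphism $L^o\rightiso(L^\vee)^o$), so the two arguments are essentially identical.
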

\begin{proof}
	Let $\thc,\widetilde{\thc}\in A^{0,0}(X;\mcE)$ be locally $\SLc$-standard elements. Lemma~\ref{lem:local_Thom} yields that 
	\[
	-\cup \thc \colon A^{0,0}(-) \to A^{0,0}(-;\mcE)
	\]
	is an isomorphism of presheaves on the slice category $\Smk/X$. Then $A^{0,0}(-;\mcE)$ restricts to a sheaf on the small Zariski site of $X$.
	
	Choose the covers and trivializations from Definition~\ref{def:locally_standard} for $\thc$ and $\widetilde{\thc}$. Intersecting the elements of the covers and restricting trivializations we may choose a Zariski open cover $X=\bigcup_{i=1}^m U_i$ and trivializations $\theta_i,\widetilde{\theta}_i\colon \mcE_{U_i}\rightiso \triv^{\SLc,n}_{U_i}$ such that
	\[
	\thc|_{U_i}=\theta_i^A \Sigma_{\T}^n 1,\quad \widetilde{\thc}|_{U_i}=\widetilde{\theta}_i^A \Sigma_{\T}^n 1.
	\]
	The automorphisms
	\[
	\widetilde{\theta}_i\circ \theta_i^{-1}\colon \triv^{\SLc,n}_{U_i}\rightiso \triv^{\SLc,n}_{U_i}
	\]
	are given by 
	\[
	(g_i,\lambda_i)\in \mathrm{SL^c_n}(R_i)\subseteq \GL_n(R_i)\times R_i^*, \quad R_i=\Gamma(U_i,\struct_{U_i})
	\]
	with $\det g_i=\lambda_i^2$. Put $g_i'=g_i d_i$ for the diagonal matrix $d_i=\operatorname{diag}(\lambda_i^{-2},1,\hdots 1)$. Then $\det g_i'=1$. Every matrix of determinant $1$ over a local ring is an elementary matrix (i.e. a product of transvections) whence for every $i$ there exists a Zariski open covering $U_i=\bigcup_{j=1}^{m_i} U_{ij}$ such that $g'_i|_{U_{ij}}$ is an elementary matrix for every $j$. It is well known that the action of an elementary matrix on the Thom space of a trivial vector bundle is homotopy equivalent to the identity morphism (see, for example, \cite[Lemma~1]{An16b}). Moreover,  multiplication by a perfect square on the Thom space of a trivial line bundle is also homotopy equivalent to the identity morphism, see, for example, \cite[Lemma~5]{An16b} (the standing assumption of the loc. cit. that the base scheme $S$ is the spectrum of a field is not used in the proof of the lemma). Hence
	\begin{multline*}
	\widetilde{\thc}|_{U_{ij}}=(\widetilde{\theta}_i^A\Sigma_{\T}^n 1)|_{U_{ij}} = (\widetilde{\theta}_i|_{U_{ij}})^A(\Sigma_{\T}^n 1) = (\theta_i|_{U_{ij}})^A\circ (\widetilde{\theta_i}\circ\theta_i^{-1})|_{U_{ij}}^A (\Sigma_{\T}^n 1) = \\
	= (\theta_i|_{U_{ij}})^A\circ (d_i|_{U_{ij}})^A \circ (g'_i|_{U_{ij}})^A (\Sigma_{\T}^n 1) = (\theta_i|_{U_{ij}})^A (\Sigma_{\T}^n 1) =(\theta_i^A\Sigma_{\T}^n 1)|_{U_{ij}}=\thc|_{U_{ij}}.
	\end{multline*}
	Then $\thc$ and $\widetilde{\thc}$ coincide locally and since we have already shown that $A^{0,0}(-;\mcE)$ is a sheaf this yields $\thc=\widetilde{\thc}$.
\end{proof}

\begin{theorem} \label{thm:sheafSL}
	Let $A\in \SHk$ be a commutative ring spectrum and suppose that $A^{0,0}(-)$ is a sheaf in the Zariski topology. Then $A$ admits a unique normalized $SL^c$-orientation.
\end{theorem}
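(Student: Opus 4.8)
Uniqueness is already settled by Lemma~\ref{lem:orient_unique}, so the plan is to construct a normalized $\SLc$-orientation, the idea being to \emph{define} $\thc(\mcE)$, for a rank $n$ vector $\SLc$-bundle $\mcE$ over $X\in\Smk$, to be the unique locally $\SLc$-standard element of $A^{0,0}(X;\mcE)$. Lemma~\ref{lem:unique_local} already guarantees there is at most one such element --- its hypothesis holds for every $X\in\Smk$ because $A^{0,0}(-)$ is a Zariski sheaf --- so the real work is to show one exists, and once existence is known each clause of Definition~\ref{def:orientation} should fall out of this uniqueness.

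For existence I would apply Lemma~\ref{lem:locallytrivial} to pick a Zariski open cover $X=\bigcup_{i=1}^m U_i$ together with trivializations $\theta_i\colon\mcE|_{U_i}\rightiso\triv^{\SLc,n}_{U_i}$ and induct on $m$. For $m=1$ the element $\theta_1^A\Sigma_\T^n 1$ does the job. For $m>1$, set $V=\bigcup_{i<m}U_i$ and $W=U_m$: by the inductive hypothesis there is a locally $\SLc$-standard $\thc_V\in A^{0,0}(V;\mcE)$ (for the cover $\{U_i\}_{i<m}$), while $\thc_W=\theta_m^A\Sigma_\T^n 1\in A^{0,0}(W;\mcE)$ is locally $\SLc$-standard for $\{U_m\}$; by Lemma~\ref{lem:pull_locally_standard} the restrictions $\thc_V|_{V\cap W}$ and $\thc_W|_{V\cap W}$ are both locally $\SLc$-standard, hence equal by Lemma~\ref{lem:unique_local}; and a Mayer--Vietoris long exact sequence in $\mcE$-twisted $A$-cohomology, available exactly as in the proof of Lemma~\ref{lem:local_Thom}, then supplies $\thc\in A^{0,0}(X;\mcE)$ with $\thc|_V=\thc_V$ and $\thc|_W=\thc_W$, which is therefore locally $\SLc$-standard for the full cover $\{U_i\}_{i=1}^m$. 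By Lemma~\ref{lem:unique_local} this $\thc$ does not depend on the cover or the trivializations, so I may set $\thc(\mcE)=\thc$. Taking $\mcE=\triv^{\SLc,n}_X$ with the identity trivialization gives $\thc(\triv^{\SLc,n}_X)=\Sigma_\T^n 1$ for every $X$, which is property~\ref{def:orientation_norm}.

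Properties~\ref{def:orientation_iso} and~\ref{def:orientation_pull} of Definition~\ref{def:orientation} then follow from the same uniqueness. For an isomorphism $\theta\colon\mcE\rightiso\mcE'$ over $X$, choose a trivializing cover $\{U_i\}$ of $\mcE'$ with trivializations $\theta_i'$; then $\theta_i'\circ\theta|_{U_i}$ trivializes $\mcE|_{U_i}$ and naturality of the pullback gives $(\theta^A\thc(\mcE'))|_{U_i}=(\theta_i'\circ\theta|_{U_i})^A\Sigma_\T^n 1$, so $\theta^A\thc(\mcE')$ is locally $\SLc$-standard and therefore equals $\thc(\mcE)$ by Lemma~\ref{lem:unique_local}. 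For a morphism $f\colon Y\to X$ of smooth $S$-schemes, Lemma~\ref{lem:pull_locally_standard} shows $f^A\thc(\mcE)$ is locally $\SLc$-standard in $A^{0,0}(Y;f^*\mcE)$, so $f^A\thc(\mcE)=\thc(f^*\mcE)$ by Lemma~\ref{lem:unique_local} applied over $Y$.

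The step I expect to be the main obstacle is multiplicativity, property~\ref{def:orientation_mult}, since it is the one clause that does not reduce directly to the uniqueness of locally standard classes. Here my plan is to choose a common trivializing cover $\{U_i\}$ of $\mcE$ and $\mcE'$ and restrict the desired identity $\thc(\mcE\oplus\mcE')=q_1^A\thc(\mcE)\cup q_2^A\thc(\mcE')$ to each $U_i$: the left-hand side restricts correctly by the already-established property~\ref{def:orientation_pull}, and the right-hand side by naturality of the twisted cup-product and of the pullbacks $q_1^A,q_2^A$, so the identity reduces to the case of trivialized bundles over a single base. In that case it becomes the assertion that $\Sigma_\T^{n+n'}1$ equals the product of the projection pullbacks of $\Sigma_\T^n 1$ and $\Sigma_\T^{n'}1$, a formal consequence of the multiplicative structure on $A^{*,*}(-)$ recalled in \cite[Section~3]{An17} (the unit is multiplicative and $\T$-suspensions are compatible with products); the only delicate point is bookkeeping with the cup-product formalism and the chosen trivializations. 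With all four properties of Definition~\ref{def:orientation} verified, $\mcE\mapsto\thc(\mcE)$ is a normalized $\SLc$-orientation, and by Lemma~\ref{lem:orient_unique} it is the only one.
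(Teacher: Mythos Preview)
Your proposal is correct and follows essentially the same approach as the paper: construct the Thom class as the unique locally $\SLc$-standard element via an inductive Mayer--Vietoris gluing, then verify the axioms by showing each candidate is locally $\SLc$-standard and invoking Lemma~\ref{lem:unique_local}. The one place you diverge slightly is property~\ref{def:orientation_mult}: you describe it as ``the one clause that does not reduce directly to the uniqueness of locally standard classes,'' but in fact the paper handles it exactly that way---on a common trivializing cover one checks $(q_1^A\thc(\mcE)\cup q_2^A\thc(\mcE'))|_{U_i}=(\theta_i\oplus\widetilde\theta_i)^A\Sigma_\T^{n+n'}1$, so the cup product is itself locally $\SLc$-standard and hence equals $\thc(\mcE\oplus\mcE')$ by Lemma~\ref{lem:unique_local}, with no separate appeal to the sheaf condition needed.
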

\begin{proof}
	The proof goes in two steps: first we show that for every rank $n$ vector $\SLc$-bundle $\mcE$ there exists a locally $\SLc$-standard element $\thc(\mcE)\in A^{0,0}(X;\mcE)$ and then we apply the uniqueness of such elements given by Lemma~\ref{lem:unique_local} in order to show that they satisfy the properties of Definition~\ref{def:orientation}.
	
	Let $\mcE$ be a rank $n$ vector $\SLc$-bundle over $X\in \Smk$. Applying Lemma~\ref{lem:locallytrivial} choose a Zariski open cover $X=\bigcup_{i=1}^m U_i$ and isomorphisms of vector $\SLc$-bundles $\theta_i\colon \mcE|_{U_i}\rightiso \triv^{\SLc,n}_{U_i}$. For every $i$ put 
	\[
	\thc_{i}=\theta_i^A\Sigma_{\T}^n 1\in A^{0,0}(U_i;\mcE).
	\]
	Denote $U_{\le j}=\bigcup_{i=1}^j U_i$. We will inductively construct locally $\SLc$-standard elements $\thc_{\le j}\in A^{0,0}(U_{\le j};\mcE)$. Put $\thc_{\le 1}=\thc_1$. In order to construct $\thc_{\le j}$ consider the following fragment of the Mayer-Vietoris long exact sequence.
	\[
	\hdots \to A^{0,0}(U_{\le j};\mcE)\to A^{0,0}(U_{\le j-1};\mcE) \oplus A^{0,0}(U_{j};\mcE) \to A^{0,0}(U_{\le j-1}\cap U_j;\mcE)\to \hdots
	\]
	Lemma~\ref{lem:pull_locally_standard} yields that both the elements 
	\[
	\thc_{\le j-1}|_{U_{\le j-1}\cap U_j},\,\thc_{j}|_{U_{\le j-1}\cap U_j}\in A^{0,0}(U_{\le j-1}\cap U_j;\mcE)
	\]
	are locally $\SLc$-standard whence Lemma~\ref{lem:unique_local} yields
	\[
	\thc_{\le j-1}|_{U_{\le j-1}\cap U_j}=\thc_{j}|_{U_{\le j-1}\cap U_j}.
	\]
	It follows from the above Mayer-Vietoris long exact sequence that there exists some
	\[
	\thc_{\le j}\in A^{0,0}(U_{\le j};\mcE)
	\]
	such that $\thc_{\le j}|_{U_{\le j-1}}=\thc_{\le j-1}$ and $\thc_{\le j}|_{U_j}=\thc_j$. This element is clearly locally $\SLc$-standard for the open cover $U_{\le j}=\bigcup_{i=1}^j U_i$ and isomorphisms of vector $\SLc$-bundles $\theta_i\colon \mcE|_{U_i}\rightiso \triv^{\SLc,n}_{U_i}$. Put 
	\[
	\thc(\mcE)=\thc_{\le m}.
	\]
	Note that $\thc(\mcE)$ does not depend on the choices made above, i.e. on the choice of an open cover  $X=\bigcup_{i=1}^m U_i$ and isomorphisms $\theta_i$, since $\thc(\mcE)$ is locally $\SLc$-standard by construction whence unique by Lemma~\ref{lem:unique_local}.
	
	Now we are going to check that the properties of Definition~\ref{def:orientation} hold for the constructed elements $\thc(\mcE)$. Each time it is sufficient to check that the element at right-hand side of the corresponding equality is locally $\SLc$-standard and then apply Lemma~\ref{lem:unique_local}. 
	\begin{enumerate}
		\item 
		For the property~\ref{def:orientation_iso} choose a Zariski open cover $X=\bigcup_{i=1}^m U_i$ and isomorphisms of vector $\SLc$-bundles $\theta_i\colon \mcE'|_{U_i}\rightiso \triv^{\SLc,n}_{U_i}$ such that $\thc(\mcE')|_{U_i}=\theta_i^A\Sigma^n_\T 1$. Then $\theta^A\thc(\mcE')_{U_i}=(\theta_i\circ \theta|_{U_i})^A\Sigma^n_\T 1$ whence $\theta^A\thc(\mcE')$ is locally $\SLc$-standard with the cover $X=\bigcup_{i=1}^m U_i$ and trivializations $\theta_i\circ \theta|_{U_i}$.
		\item 
		For the property~\ref{def:orientation_pull} note that the element $f^A\thc(\mcE)$ is locally $\SLc$-standard by Lemma~\ref{lem:pull_locally_standard}.
		\item 
		For the property~\ref{def:orientation_mult} intersecting the elements of open covers for $\mcE$ and $\mcE'$ and restricting isomorphisms we may choose a Zariski open cover $X=\bigcup_{i=1}^m U_i$ and isomorphisms of vector $\SLc$-bundles 
		\[
		\theta_i\colon \mcE|_{U_i}\rightiso \triv^{\SLc,n}_{U_i},\quad \widetilde{\theta}_i\colon \mcE'|_{U_i}\rightiso \triv^{\SLc,n'}_{U_i}\]
		such that $\thc(\mcE)|_{U_i}=\theta_i^A\Sigma^n_\T 1$, $\thc(\mcE')|_{U_i}=\widetilde{\theta}_i^A\Sigma^{n'}_\T 1$. Then 
		\[
		(q_1^A \thc(\mathcal{E})\cup q_2^A \thc(\mathcal{E}'))|_{U_i}=(\theta_i\oplus \widetilde{\theta}_i)^A (\Sigma^{n+n'}_\T 1)
		\]
		whence $q_1^A \thc(\mathcal{E})\cup q_2^A \thc(\mathcal{E}')$ is locally $\SLc$-standard with the open cover $X=\bigcup_{i=1}^m U_i$ and trivializations $\theta_i\oplus \widetilde{\theta}_i$.
		\item 
		For the property~\ref{def:orientation_norm} note that $\Sigma_{\T}^n 1$ is tautologically locally $\SLc$-standard.
	\end{enumerate}
	
	The uniqueness of the normalized $\SLc$-orientation follows from Lemma~\ref{lem:orient_unique}.
\end{proof}
\begin{corollary} \label{cor:sheafSL}
	Let $A\in \SHk$ be a commutative ring spectrum and suppose that $A^{0,0}(-)$ is a sheaf in the Zariski topology. Then $A$ admits a unique normalized $SL$-orientation.	
\end{corollary}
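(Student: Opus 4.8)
The plan is to obtain this as a formal consequence of Theorem~\ref{thm:sheafSL}, combined with the chain of implications recorded in Lemma~\ref{lem:orientationshomo} and the uniqueness statement of Lemma~\ref{lem:orient_unique}. In other words, all of the substantive work — constructing locally $\SLc$-standard Thom classes and checking the axioms — has already been done for the structure group $\SLc$, and the only task here is to transport it to $\SL$ and to record uniqueness.

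For existence, I would first apply Theorem~\ref{thm:sheafSL}: since $A^{0,0}(-)$ is a Zariski sheaf, $A$ carries a normalized $\SLc$-orientation, assigning to every rank $n$ vector $\SLc$-bundle $\mcE$ a Thom class $\thc(\mcE)\in A^{0,0}(X;\mcE)$. Then I would invoke Lemma~\ref{lem:orientationshomo}, whose proof shows explicitly how an $\SLc$-orientation induces a normalized $\SL$-orientation: for a vector $\SL$-bundle $\mcF$ over $X\in\Smk$ one sets $\thc(\mcF)=\thc(\Psi(\mcF))$, where $\Psi$ is the functor of Definition~\ref{def:Pfaffian} sending an $\SL$-bundle to its associated $\SLc$-bundle. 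Properties~\ref{def:orientation_iso} and~\ref{def:orientation_pull} of Definition~\ref{def:orientation} then follow from the functoriality of $\Psi$, and properties~\ref{def:orientation_mult} and~\ref{def:orientation_norm} from Lemma~\ref{lem:pfaf} (parts~(3) and~(6)). This produces the desired normalized $\SL$-orientation on $A$.

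Uniqueness is then immediate: Lemma~\ref{lem:orient_unique} applies to an arbitrary structure group $G\in\{\GL,\Sp,\SL,\SLc\}$, and in particular for $G=\SL$ it says that under the standing hypothesis that $A^{0,0}(-)$ is a Zariski sheaf, $A$ admits at most one normalized $\SL$-orientation. Combining this with the existence just established gives the statement. I do not expect any genuine obstacle in this argument; the only points worth verifying are purely bookkeeping — that the sheaf hypothesis is literally the same in all three cited results, and that Lemma~\ref{lem:orient_unique} indeed covers the case $G=\SL$ (it does, as it is stated uniformly for all the structure groups under consideration).
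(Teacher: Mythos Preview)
Your proposal is correct and follows exactly the paper's own proof, which reads in full: ``The existence follows from Lemma~\ref{lem:orientationshomo} and Theorem~\ref{thm:sheafSL}, the uniqueness follows from Lemma~\ref{lem:orient_unique}.'' You have simply unpacked these citations in more detail.
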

\begin{proof}
	The existence follows from Lemma~\ref{lem:orientationshomo} and Theorem~\ref{thm:sheafSL}, the uniqueness follows from Lemma~\ref{lem:orient_unique}.
\end{proof}

\begin{remark} \label{rem:asheaf}
	Let $S=\Spec k$ be the spectrum of a field. Theorem~\ref{thm:sheafSL} and Corollary~\ref{cor:sheafSL} provide a uniform approach to the $\SLc$ and $\SL$ orientations of some well known motivic cohomology theories that are known to be $\SLc$-oriented by some other means. In particular, $A^{0,0}(-)$ is a sheaf in the following cases.
	\begin{enumerate}
		\item 
		$A$ is the Eilenberg-MacLane spectrum for a homotopy module \cite[Section~5.2]{Mor04}, i.e. $A$ belongs to the heart of the homotopy $t$-structure on the category $\SH(k)$.
		\item
		$A=\mathrm{H}\Z$ is the spectrum representing motivic cohomology. In this case $A^{0,0}(-)=\mathrm{H}\Z^{0,0}(-)$ is well known to be the constant sheaf $\underline{\Z}$.
		\item
		$A=\mathrm{H_{MW}}\Z$ is the spectrum representing Milnor-Witt motivic cohomology \cite{DF17}, $k$ is infinite and perfect. Then $A^{0,0}(-)=\mathrm{H_{MW}}\Z^{0,0}(-)\cong \underline{\mathrm{GW}}$ is the unramified Grotendieck--Witt sheaf \cite[Section~3.2.1]{DF17}.
	\end{enumerate}
\end{remark}

\section{Interlude on the Hopf element and the connecting homomorphism} \label{section:hopf}
\begin{definition}[{\cite[Definition~3.6]{An16a}}]
	Let $A\in\SHk$ be a commutative ring spectrum and $u\in \Gamma(X,\struct^*_X)$ be an invertible regular function on $X\in\Smk$. Put
	\[
	\langle u \rangle = \langle u \rangle_A = \Sigma^{-1}_\T f_u^A \Sigma_\T 1 \in A^{0,0}(X)
	\]
	for the morphism $f_u\colon X_+\wedge \T\to X_+\wedge \T$ given by $f_u(x,t)=(x,u(x)t)$. 
\end{definition}

\begin{lemma} \label{lem:quadratic}
	Let $u,v\in \Gamma(X,\struct^*_X)$ be invertible regular functions on $X\in\Smk$. Then for a commutative ring spectrum $A\in\SHk$ one has
	\[
	\langle v^2u\rangle =\langle u\rangle \in A^{0,0}(X).
	\]
\end{lemma}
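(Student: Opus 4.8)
The plan is to factor the scaling map $f_{v^2u}$ as $f_{v^2}\circ f_u$ and to observe that $f_{v^2}$ induces the identity on $A$-cohomology, so that $\langle v^2u\rangle$ and $\langle u\rangle$ are computed by the same formula. First I would record the elementary composition law for scaling maps: directly from the definition $f_c(x,t)=(x,c(x)t)$ one has, for invertible regular functions $a,b$ on $X$, the identities $f_{ab}=f_a\circ f_b=f_b\circ f_a$ and $f_1=\id$ as pointed self-maps of $X_+\wedge\T$. In particular $f_{v^2u}=f_{v^2}\circ f_u$, and applying the (contravariant) pullback $(-)^A$ gives $f_{v^2u}^A=f_u^A\circ f_{v^2}^A$ on $A^{*,*}(X_+\wedge\T)$.

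The crux is that $f_{v^2}^A=\id$. Under the canonical identification $X_+\wedge\T\simeq\Th(\triv_X)$ the map $f_{v^2}$ is multiplication by the perfect square $v^2$ on the Thom space of the trivial line bundle, and such a map is $\A^1$-homotopic to the identity; this is exactly the input already used in the proof of Lemma~\ref{lem:unique_local} (see \cite[Lemma~5]{An16b}, whose argument does not use that the base is a field). For completeness one can argue as follows: writing $\Th(\triv_X)\simeq X_+\wedge\Proj^1$ with $\Proj^1=\Proj(\A^2)$, multiplication by $v^2$ corresponds to the action of the matrix $\operatorname{diag}(v,v^{-1})\in\SL_2(\Gamma(X,\struct_X))$, which by the Whitehead lemma is a product of elementary matrices over any commutative ring, and the action on $\Proj^1$ of an elementary matrix $e_{ij}(a)$ is $\A^1$-homotopic to the identity via the linear homotopy $e_{ij}(sa)$, $s\in\A^1$. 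Hence $f_{v^2}$ is the identity in $\Hp(S)$ and $f_{v^2}^A=\id$.

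Combining the two steps yields $f_{v^2u}^A=f_u^A$, whence
\[
\langle v^2u\rangle=\Sigma_\T^{-1}f_{v^2u}^A\Sigma_\T 1=\Sigma_\T^{-1}f_u^A\Sigma_\T 1=\langle u\rangle .
\]
I expect the only substantive point to be the homotopy $f_{v^2}\simeq\id$, i.e.\ the identity $\operatorname{diag}(v,v^{-1})\in E_2(\Gamma(X,\struct_X))$ together with the routine null-homotopies of the elementary actions; everything else is formal manipulation of the definition of $\langle-\rangle$. Note in particular that, unlike the results of Section~\ref{section:orientsheaf}, this argument — and hence the lemma — requires no hypothesis on $A^{0,0}(-)$ and no condition on the base $S$.
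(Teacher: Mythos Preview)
Your strategy is exactly the paper's: factor $f_{v^2u}=f_{v^2}\circ f_u$ and invoke the fact that multiplication by a perfect square on $\Th(\triv_X)$ is the identity (stably), which both you and the paper source from \cite[Lemma~5]{An16b}. So at the level of approach there is nothing to distinguish.

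Where your write-up slips is in the ``for completeness'' justification. On $(\PP^1,\infty)$ the Whitehead factorization of $\operatorname{diag}(v,v^{-1})$ necessarily uses lower elementary matrices $e_{21}(a)$, and these do \emph{not} act by pointed maps: $e_{21}(a)$ sends $\infty=[1:0]$ to $[1:a]$, so neither the map nor the linear homotopy $e_{21}(sa)$ preserves the basepoint. Your chain of homotopies therefore only shows that $f_{v^2}$ and $\id$ agree in the \emph{unpointed} category $\mathcal{H}(S)$, and the sentence ``Hence $f_{v^2}$ is the identity in $\Hp(S)$'' does not follow from what precedes it. (Equivalently: the stabilizer of $\infty$ in $\mathrm{PGL}_2$ is the affine group $\Gm\ltimes\mathbb{G}_a$, and its $\Gm$-part is $\A^1$-rigid, so no naive path of automorphisms connects $\operatorname{diag}(v^2,1)$ to $\id$ while fixing $\infty$.)

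The gap is genuine but easy to close, and only the \emph{stable} statement is needed for $f_{v^2}^A=\id$. One clean fix: your unpointed homotopy gives $\Sigma^\infty_+(f_{v^2})=\id$ on $\Sigma^\infty\PP^1_+$; since $f_{v^2}$ fixes $\infty$, the endomorphism $\Sigma^\infty(f_{v^2})_+$ is diagonal for the stable splitting $\Sigma^\infty\PP^1_+\simeq \mathbb{S}\oplus\Sigma^\infty(\PP^1,\infty)$ with components $\id$ and $\Sigma^\infty f_{v^2}$, whence $\Sigma^\infty f_{v^2}=\id$. Alternatively, run the elementary-matrix argument on $\T^{\wedge 2}=\A^2/(\A^2-0)$, where every matrix in $\GL_2$ (and every path $e_{ij}(sa)$) preserves $\A^2-0$ and hence acts by \emph{pointed} maps; this is closer to how \cite[Lemmas~1 and~5]{An16b} are set up, and is what the paper is implicitly invoking.
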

\begin{proof}
	The same proof as in \cite[Lemma~5]{An16b} yields that 
	\[
	\Sigma_{\T}^\infty f_u= \Sigma_{\T}^\infty f_{v^2u} \colon \Sigma_{\T}^\infty X_+\wedge \T \to \Sigma_{\T}^\infty X_+\wedge \T
	\]
	whence the claim. Note that the assumption $S=\Spec k$ of \cite{An16b} is not needed for the proof, one applies literally the same reasoning over a general base.
\end{proof}

\begin{definition}
	The \textit{Hopf map} is the morphism $H\colon \A^2-(0,0) \to \PP^1$ given by $H(x,y)=[x:y]$. Recall that there are canonical isomorphisms 
	\[
	(\A^2-(0,0),(1,1))\cong \Sph^{3,2},\quad (\PP^1,[1:1])\cong \Sph^{2,1}
	\]
	in the unstable homotopy category \cite[Lemma~2.15, Example~2.20]{MV99} whence the Hopf map gives rise to the \textit{Hopf element} $\eta \in \pi^{-1,-1}(S)$. See \cite[Definition~3.5]{An16a} for the precise formula defining $\eta$ out of $H$ that we are going to use in the current paper. For a commutative ring spectrum $A\in\SHk$ we abuse the notation and denote by the same letter the associated Hopf element $\eta \in A^{-1,-1}(S)$.
\end{definition}

\begin{lemma} \label{lem:connecting}
	Let $X\in\Smk$ and $t\in \Gamma(X\times \Gm,\struct^*_{X\times \Gm})$ be the invertible function on $X\times \Gm$ given by the coordinate function on $\Gm$. Then for a commutative ring spectrum $A\in\SHk$ 
	one has
	\[
	\partial \langle -t\rangle = \Sigma_\T \eta
	\]
	where $\partial$ is the connecting homomorphism in the localization long exact sequence
	\[
	\cdots \to A^{*,*}(X_+\wedge \T) \to A^{*,*}(X\times \A^1 ) \to A^{*,*}(X\times \Gm) \xrightarrow{\partial} A^{*+1,*}(X_+\wedge \T)\to\cdots
	\]
\end{lemma}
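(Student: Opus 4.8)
The plan is to unwind both sides of the claimed identity $\partial\langle -t\rangle = \Sigma_\T\eta$ directly from the definitions, using that $\langle -t\rangle$ is, up to the suspension isomorphism, the pullback of $\Sigma_\T 1$ along the map $f_{-t}$ that multiplies the $\T$-coordinate by $-t$ over $X\times\Gm$. The key geometric input is the explicit description of the Hopf element $\eta$ out of the Hopf map $H\colon \A^2-(0,0)\to\PP^1$ recorded in \cite[Definition~3.5]{An16a}; the whole proof amounts to recognizing the connecting homomorphism applied to $\langle -t\rangle$ as exactly that formula.

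First I would set up the localization sequence geometrically. Writing $\A^1$ with coordinate $s$, the pair $(\A^1, \Gm)$ relative to $X$ has as its cofiber the Thom space $X_+\wedge\T$, and the connecting homomorphism $\partial$ is induced by the boundary map $X_+\wedge\T\simeq (X\times\A^1)/(X\times\Gm)\to \Sigma(X\times\Gm_+)$ in the Puppe sequence, desuspended appropriately. The class $\langle -t\rangle\in A^{0,0}(X\times\Gm)$ is $\Sigma_\T^{-1}$ applied to $f_{-t}^A\Sigma_\T 1\in A^{2,1}((X\times\Gm)_+\wedge\T)$. So $\partial\langle -t\rangle$ is computed by composing the boundary map with $f_{-t}$, and the task is to identify the resulting element of $A^{*+1,*}(X_+\wedge\T)$ with $\Sigma_\T\eta$, i.e. with the image of $\eta\in A^{-1,-1}(S)$ under pullback to $X$ followed by $\Sigma_\T$.

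Next I would reduce to the universal case. Both sides are stable under pullback along $X\to S$ (for the right side by naturality of $\eta$ and $\Sigma_\T$; for the left side by naturality of $\partial$ and of $\langle-\rangle$), and $t$ is pulled back from $\Gm$ itself, so it suffices to treat $X=S$ and $t$ the coordinate on $\Gm$. Now the computation becomes a statement purely about spaces: I claim the composite
\[
\Gm{}_+\wedge\T \xrightarrow{f_{-t}} \Gm{}_+\wedge\T \xrightarrow{\partial} \Sph^1\wedge\T
\]
agrees, in the stable homotopy category over $S$, with $\Sph^1\wedge(\text{Hopf map})$ up to the identifications $(\A^2-0)\cong\Sph^{3,2}$, $\PP^1\cong\Sph^{2,1}=\T$. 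This is precisely the kind of unstable manipulation carried out in \cite[Section~3]{An16a}: the function $-t$ on $\Gm$, together with the first coordinate, patches the two standard charts of $\PP^1$ into the open cover whose Mayer–Vietoris boundary is the Hopf map, and the sign $-t$ (rather than $t$) is exactly what makes the gluing cocycle match $H(x,y)=[x:y]$ on $\A^2-(0,0)$. I would then invoke the explicit formula of \cite[Definition~3.5]{An16a} to conclude the two stable maps coincide, hence $\partial\langle -t\rangle = \Sigma_\T\eta$.

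The main obstacle is bookkeeping: matching conventions for the suspension isomorphisms $\Sigma_\T$, the sign and direction of the connecting homomorphism $\partial$ in the chosen localization sequence, and the precise identifications $(\A^2-(0,0),(1,1))\cong\Sph^{3,2}$ and $(\PP^1,[1:1])\cong\Sph^{2,1}$ from \cite{MV99}, so that the geometric gluing picture produces $\Sigma_\T\eta$ on the nose rather than $\pm\Sigma_\T\eta$ or a $\Gm$-suspension off by one. Once those conventions are pinned down to agree with \cite[Definition~3.5]{An16a}, the identity is essentially a translation of the construction of $\eta$ there into the language of the connecting homomorphism, and no genuinely new computation is needed.
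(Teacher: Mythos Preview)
Your outline is sound in spirit---reducing to $X=S$ and then matching the boundary of $f_{-t}$ against the explicit construction of $\eta$ is exactly the kind of computation underlying the result---but you are proposing to redo work that the paper simply cites. The paper's proof is two lines: first apply Lemma~\ref{lem:quadratic} to get $\langle -t\rangle=\langle -t^{-1}\rangle$ (since $-t=t^{2}\cdot(-t^{-1})$), and then invoke \cite[Theorem~3.8]{An16a}, which already computes $\partial\langle -t^{-1}\rangle=\Sigma_\T\eta$ over a field; the paper only adds the remark that the base-scheme hypothesis there is irrelevant to the argument. You never mention the quadratic relation, and you cite only \cite[Definition~3.5]{An16a} rather than the theorem that does the actual work, so the ``bookkeeping'' you defer is precisely the content of \cite[Theorem~3.8]{An16a}. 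Your route would succeed, but it reproves that theorem rather than using it; the paper's route is shorter because the quadratic lemma lets one land directly on the statement already available in the literature.
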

\begin{proof}
	Lemma~\ref{lem:quadratic} yields that $\langle -t\rangle = \langle -t^{-1}\rangle$ Then the claim follows from \cite[Theorem~3.8]{An16a}. Note that the assumption $S=\Spec k$ of \cite{An16a} is not needed for the proof, one applies literally the same reasoning over a general base. 
\end{proof}

\begin{remark}
	One should not be too surprised to see $\langle -t\rangle$ in the statement of the above lemma since the computation depends on the particular choice of isomorphisms $(\A^2-(0,0),(1,1))\cong \Sph^{3,2}$ and $(\PP^1,[1:1])\cong \Sph^{2,1}$ defining $\eta$. In the current paper we adopted the choices made in \cite[Definition~3.5]{An16a}, if one defines $\widetilde{\eta}$ via the same isomorphism $(\A^2-\{0,0\},(1,1))\cong \Sph^{3,2}$ and the same isomorphism $ (\PP^1,[1:1])\cong \Sph^{2,1}$ precomposed with an automorphism $(\PP^1,[1:1])\to (\PP^1,[1:1])$ given by $[x:y]\mapsto [y:x]$ then one can show that $	\partial \langle t\rangle = \Sigma_\T \widetilde{\eta}$.
\end{remark}

\section{Characteristic classes and Hopf element} \label{section:charclasses}

\begin{definition}
	Let $A\in\SHk$ be a $G$-oriented spectrum and $\mcE$ be a rank $n$ vector $G$-bundle over $X\in\Smk$. Put
	\[
	e(\mcE)=z^A \thc(\mcE) \in A^{2n,n}(X)
	\]
	for the pullback 
	\[
	z^A\colon A^{0,0}(X;\mcE)= A^{2n,n}(\Th(\mcE)) \to A^{2n,n}(X)
	\]
	along the morphism $z\colon X\to \Th(\mcE)$ induced by the zero section of the bundle $\mcE$. We refer to $e(\mcE)$ as the \textit{Euler class} of $\mcE$.
\end{definition}

\begin{lemma}
	Let $A\in\SHk$ be a $G$-oriented spectrum. Then
	\begin{enumerate}
		\item 
		$e(\mcE)=e(\mcE')$ for isomorphic vector $G$-bundles $\mcE$ and $\mcE'$.
		\item 
		$f^Ae(\mcE)=e(f^*\mcE)$ for a morphism of smooth $S$-schemes $f\colon Y\to X$ and a vector $G$-bundle $\mcE$ over $X$.
		\item 
		$e(\mcE\oplus \mcE')=e(\mcE)\cup e(\mcE')$ for vector $G$-bundles $\mcE$ and $\mcE'$ over $X\in \Smk$.
	\end{enumerate}
\end{lemma}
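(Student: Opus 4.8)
The plan is to obtain all three statements by applying the zero-section pullback to the corresponding axioms of Definition~\ref{def:orientation} and exploiting naturality of the zero section. Write $z_\mcE\colon X\to\Th(\mcE)$ for the morphism induced by the zero section, so that $e(\mcE)=z_\mcE^A\thc(\mcE)$ by definition. Since $z_\mcE^A$ is a composite of pullbacks (the forgetting-of-supports map $A^{*,*}(\Th(\mcE))\to A^{*,*}(E)$ followed by the pullback along the zero embedding $X\to E$), it is a homomorphism of rings; this is the only structural fact about $z_\mcE^A$ I will use.

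For (1), let $\theta\colon\mcE\rightiso\mcE'$ be an isomorphism of vector $G$-bundles. It carries the zero section of $\mcE$ onto that of $\mcE'$, so the induced map of Thom spaces satisfies $\Th(\theta)\circ z_\mcE=z_{\mcE'}$, hence $z_\mcE^A\circ\theta^A=z_{\mcE'}^A$; combining this with property~\ref{def:orientation_iso} gives $e(\mcE)=z_\mcE^A\theta^A\thc(\mcE')=z_{\mcE'}^A\thc(\mcE')=e(\mcE')$. For (2), let $f\colon Y\to X$ be a morphism in $\Smk$ and $\widetilde{f}\colon f^*\mcE\to\mcE$ the induced morphism of total spaces, so that the $f^A$ of property~\ref{def:orientation_pull} is pullback along $\Th(\widetilde{f})$. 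As $\widetilde{f}$ restricts to $f$ on zero sections we get $\Th(\widetilde{f})\circ z_{f^*\mcE}=z_\mcE\circ f$, whence $f^A\circ z_\mcE^A=z_{f^*\mcE}^A\circ\Th(\widetilde{f})^A$; together with property~\ref{def:orientation_pull} this yields $f^A e(\mcE)=z_{f^*\mcE}^A\bigl(\Th(\widetilde{f})^A\thc(\mcE)\bigr)=z_{f^*\mcE}^A\thc(f^*\mcE)=e(f^*\mcE)$.

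For (3), start from property~\ref{def:orientation_mult}, $\thc(\mcE\oplus\mcE')=q_1^A\thc(\mcE)\cup q_2^A\thc(\mcE')$, where $q_1,q_2$ are the bundle projections of $\mcE\oplus\mcE'$ onto its two summands. Applying the ring homomorphism $z_{\mcE\oplus\mcE'}^A$ and using multiplicativity,
\[
e(\mcE\oplus\mcE')=\bigl(z_{\mcE\oplus\mcE'}^A q_1^A\thc(\mcE)\bigr)\cup\bigl(z_{\mcE\oplus\mcE'}^A q_2^A\thc(\mcE')\bigr).
\]
Composing the zero section of $\mcE\oplus\mcE'$ with the projection $q_i$ gives the zero section of the $i$-th summand, so that $z_{\mcE\oplus\mcE'}^A q_1^A=z_\mcE^A$ and $z_{\mcE\oplus\mcE'}^A q_2^A=z_{\mcE'}^A$, and the right-hand side becomes $e(\mcE)\cup e(\mcE')$.

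The main obstacle is the bookkeeping hidden in step (3). Parts (1) and (2) follow from naturality of the zero section essentially formally, but the identity $z_{\mcE\oplus\mcE'}^A q_i^A=z_\mcE^A$ must be read through the identification of the twisted group $A^{0,0}(X;\mcE)$ with cohomology supported along the zero section: $q_i^A$ sends a Thom class to a class supported on the preimage under $q_i$ of the zero section, the cup product lands in cohomology supported on the intersection of these two preimages, which is precisely the zero section of $\mcE\oplus\mcE'$, and only after matching these supports does the identity reduce to functoriality of pullback applied to $q_i\circ z_{\mcE\oplus\mcE'}=z_\mcE$. One also needs the (routine) compatibility of the forgetting-of-supports maps with cup products, which is what justifies treating $z_{\mcE\oplus\mcE'}^A$ as a ring homomorphism above.
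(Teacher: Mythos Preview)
Your argument is correct and is precisely the unpacking of what the paper dismisses in one line as ``Straightforward from the properties stated in Definition~\ref{def:orientation}.'' Your last paragraph on supports and the ring-homomorphism property of $z^A$ is a reasonable expansion of the only nontrivial bookkeeping, but the approach is identical to the paper's.
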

\begin{proof}
	Straightforward from the properties stated in Definition~\ref{def:orientation}.
\end{proof}

\begin{lemma} \label{lem:change_orient}
	Let $A\in \SHk$ be an $SL$-oriented spectrum and $(E,\lambda)$ be a rank~$n$ vector $SL$-bundle over $X\in \Smk$. Then for $u\in \Gamma(X,\struct_X^*)$ one has
	\[
	\thc(E,u \cdot \lambda) = \langle u \rangle \cup \thc(E,\lambda) \in A^{0,0}(X;E).
	\]
\end{lemma}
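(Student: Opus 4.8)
The plan is to compare the two $\SL$-structures $\lambda$ and $u\lambda$ on $E$ only after adjoining a trivial line bundle -- which creates just enough room for a global automorphism of determinant $u$ -- and then to strip the trivial summand off again. The starting observation is the identity of vector $\SL$-bundles
\[
(E,\lambda)\oplus(\triv_X,u\cdot\id)=(E,u\lambda)\oplus\triv^{\SL,1}_X,
\]
where $(\triv_X,u\cdot\id)$ is $\triv_X$ equipped with the trivialization of its determinant given by multiplication by $u$ (under $\det\triv_X\cong\triv_X$). Both sides have underlying bundle $E\oplus\triv_X$ and, after writing out the canonical identifications $\det(E\oplus\triv_X)\cong\det E\otimes\triv_X$ and $\triv_X\otimes\triv_X\cong\triv_X$ built into the sum of vector $\SL$-bundles (Definition~\ref{def:SLbundle}), the same trivialization $u\cdot(\lambda\otimes\id)$ of that determinant; verifying this is pure bookkeeping.

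Applying $\thc$ to both sides and expanding by the multiplicativity property~\ref{def:orientation_mult} of Definition~\ref{def:orientation}, together with $\thc(\triv^{\SL,1}_X)=\Sigma_\T 1$ (Lemma~\ref{lem:trivialThom}), I would obtain
\[
q_1^A\thc(E,u\lambda)\cup q_2^A\Sigma_\T 1=q_1^A\thc(E,\lambda)\cup q_2^A\thc(\triv_X,u\cdot\id),
\]
where $q_1,q_2$ are the projections of $E\oplus\triv_X$ onto its summands. Next I would compute $\thc(\triv_X,u\cdot\id)$: multiplication by $u$ is an isomorphism of vector $\SL$-bundles $\mu_u\colon(\triv_X,u\cdot\id)\rightiso\triv^{\SL,1}_X$ (since $\det\mu_u=u\cdot\id$), so property~\ref{def:orientation_iso} of Definition~\ref{def:orientation} and Lemma~\ref{lem:trivialThom} give $\thc(\triv_X,u\cdot\id)=\mu_u^A\Sigma_\T 1$; and under the standard identification $\Th(\triv_X)\cong X_+\wedge\T$ the induced map $\Th(\mu_u)$ is exactly the morphism $f_u$ from the definition of $\langle u\rangle$, so $\mu_u^A\Sigma_\T 1=f_u^A\Sigma_\T 1=\Sigma_\T\langle u\rangle=\langle u\rangle\cup\Sigma_\T 1$. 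Substituting this back, using that $q_2^A$ is induced by a morphism over $X$ (hence $A^{*,*}(X)$-linear) and that the external cup product is $A^{*,*}(X)$-bilinear, and collapsing by multiplicativity once more, the displayed equation becomes
\[
\thc\bigl((E,u\lambda)\oplus\triv^{\SL,1}_X\bigr)=\langle u\rangle\cup\thc\bigl((E,\lambda)\oplus\triv^{\SL,1}_X\bigr).
\]

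It then remains to cancel the trivial summand. For any rank $n$ vector $\SL$-bundle $\mathcal F=(F,\nu)$ over $X$, multiplicativity and Lemma~\ref{lem:trivialThom} present $\thc(\mathcal F\oplus\triv^{\SL,1}_X)$ as $s\bigl(\thc(\mathcal F)\bigr)$, where $s\colon\alpha\mapsto q_1^A\alpha\cup q_2^A\Sigma_\T 1$ is an $A^{*,*}(X)$-linear isomorphism $A^{*,*}(X;F)\rightiso A^{*,*}(X;F\oplus\triv_X)$ -- it is nothing but the suspension isomorphism coming from the canonical identification $\Th(F\oplus\triv_X)\cong\Th(F)\wedge\T$, i.e.\ the relative Thom isomorphism for the trivial line bundle (cf.\ \cite[Lemma~3]{An16b} and the case $m=1$ in the proof of Lemma~\ref{lem:local_Thom}). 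Applying $s$ with $\mathcal F=(E,u\lambda)$ and with $\mathcal F=(E,\lambda)$, and using its $A^{*,*}(X)$-linearity to pull $\langle u\rangle$ through, the last display reads $s\bigl(\thc(E,u\lambda)\bigr)=s\bigl(\langle u\rangle\cup\thc(E,\lambda)\bigr)$; injectivity of $s$ then yields $\thc(E,u\lambda)=\langle u\rangle\cup\thc(E,\lambda)$, as desired.

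The step I expect to be the main obstacle is this final cancellation: one has to be certain that stabilization by a trivial line bundle is injective on twisted cohomology and strictly compatible with the $A^{*,*}(X)$-module structure, so that the bidegree $(0,0)$ class $\langle u\rangle$, which is pulled back from $X$, really does commute past the pullbacks $q_i^A$ and the external cup products and then descends. This is where the relative Thom isomorphism for the trivial line bundle and a careful handling of the multiplicative structure of $A^{*,*}(-)$ are needed; by contrast, the identity in the first step and the evaluation $\thc(\triv_X,u\cdot\id)=\langle u\rangle\cup\Sigma_\T 1$ via $f_u$ are routine once the relevant canonical isomorphisms are spelled out.
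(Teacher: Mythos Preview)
Your proof is correct and follows essentially the same route as the paper: both arguments hinge on the identity of vector $\SL$-bundles $(E,u\lambda)\oplus\triv^{\SL,1}_X=(E,\lambda)\oplus(\triv_X,u\cdot\id)$, apply multiplicativity, identify $\thc(\triv_X,u\cdot\id)$ with $\Sigma_\T\langle u\rangle$, and then cancel the trivial summand via the suspension isomorphism. You spell out the last two steps (the computation via $\mu_u\leftrightarrow f_u$ and the cancellation via your map $s$) in more detail than the paper, which compresses them into the single line $\thc(E,u\lambda)=\thc(E,\lambda)\cup\Sigma^{-1}_\T\thc(\triv_X,u\cdot\id)=\thc(E,\lambda)\cup\langle u\rangle$, but the content is the same.
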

\begin{proof}
	Property~\ref{def:orientation_mult} of Definition~\ref{def:orientation} yields 
	\[
	\thc(E,u\cdot\lambda) \cup \thc(\triv_X,\id) =\thc(E\oplus \triv_X,u\cdot\lambda\otimes \id) = \thc(E,\lambda) \cup \thc(\triv_X,u\cdot \id).
	\]
	It follows from Lemma~\ref{lem:trivialThom} that $\thc(\triv_X,\id)=\Sigma_{\T} 1$ whence
	\[
	\thc(E,u\cdot\lambda)= \thc(E,\lambda) \cup \Sigma^{-1}_\T \thc(\triv_X,u\cdot \id) = \thc(E,\lambda)\cup  \langle u \rangle. \qedhere
	\]
	
\end{proof}

\begin{theorem}[{cf. \cite[Proposition~7.2]{Lev17}}] \label{thm:Euler_torsion}
	Let $A\in \SHk$ be an $SL$-oriented spectrum and $\mcE=(E,\lambda)$ be a vector $SL$-bundle over $X\in \Smk$. Suppose that there exists an isomorphism of vector bundles $E\cong E_1\oplus E_2$ with $E_1$ being of odd rank. Then $\eta \cup e(\mcE)=0$.
\end{theorem}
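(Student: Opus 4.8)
The plan is to pull the whole situation back to $X\times\Gm$, where the chosen splitting of $E$ lets one twist the $\SL$-structure by the coordinate function, and then to feed the resulting relation into the identity $\partial\la -t\ra=\Sigma_\T\eta$ of Lemma~\ref{lem:connecting}. So fix an isomorphism $E\cong E_1\oplus E_2$ with $r:=\rank E_1$ odd, write $n=\rank E$, let $p\colon X\times\Gm\to X$ be the projection and $t$ the coordinate function on $\Gm$. Consider the automorphism $\tau\colon p^*E\rightiso p^*E$ which under the fixed splitting equals $(-t)\cdot\id_{p^*E_1}\oplus\id_{p^*E_2}$. Its effect on $\det p^*E\cong\det p^*E_1\otimes\det p^*E_2$ is multiplication by $(-t)^r=-t^{\,r}$ (here oddness of $r$ enters to produce the sign). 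Hence, with $\mcE=(E,\lambda)$, the map $\tau$ is an isomorphism of vector $\SL$-bundles $(p^*E,p^*\lambda)\rightiso(p^*E,-t^{-r}\cdot p^*\lambda)$.

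Next I would compute the Euler class. Since $e(-)$ depends only on the isomorphism class of a vector $\SL$-bundle and is compatible with pullback (properties~\ref{def:orientation_iso} and~\ref{def:orientation_pull} of Definition~\ref{def:orientation}, applied after the zero-section pullback), while rescaling the trivialization of the determinant by $u$ multiplies the Thom class, hence the Euler class, by $\la u\ra$ (Lemma~\ref{lem:change_orient}), we get
\[
p^*e(\mcE)=e(p^*E,p^*\lambda)=e(p^*E,-t^{-r}p^*\lambda)=\la -t^{-r}\ra\cup p^*e(\mcE).
\]
As $r$ is odd one has $-t^{-r}=(t^{-(r+1)/2})^{2}\cdot(-t)$, so Lemma~\ref{lem:quadratic} gives $\la -t^{-r}\ra=\la -t\ra$, and therefore $p^*e(\mcE)=\la -t\ra\cup p^*e(\mcE)$ in $A^{2n,n}(X\times\Gm)$.

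Now apply the connecting homomorphism $\partial$ from the localization long exact sequence of Lemma~\ref{lem:connecting}. That sequence is one of $A^{*,*}(X\times\A^1)$-modules, and $A^{*,*}(X\times\A^1)\cong A^{*,*}(X)$, so $\partial$ and the Thom suspension $\Sigma_\T$ are $A^{*,*}(X)$-linear; using $\partial\la -t\ra=\Sigma_\T\eta$ this yields
\[
\partial\bigl(\la -t\ra\cup p^*e(\mcE)\bigr)=\Sigma_\T\eta\cup e(\mcE)=\Sigma_\T\bigl(\eta\cup e(\mcE)\bigr).
\]
On the other hand $p^*e(\mcE)$ is pulled back along $X\times\Gm\to X\times\A^1\to X$, hence lies in the image of $A^{*,*}(X\times\A^1)\to A^{*,*}(X\times\Gm)$, so $\partial(p^*e(\mcE))=0$ by exactness. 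Combining with the relation of the previous paragraph gives $\Sigma_\T(\eta\cup e(\mcE))=0$, and since $\Sigma_\T$ is an isomorphism we conclude $\eta\cup e(\mcE)=0$.

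The main obstacle is the bookkeeping in the last step: one must be sure that the localization sequence is a sequence of modules over $A^{*,*}(X\times\A^1)\cong A^{*,*}(X)$, that $\partial$ respects this module structure, and that the Thom isomorphism does too, so that $\partial(\la -t\ra\cup p^*e(\mcE))$ really rewrites as $\Sigma_\T(\eta\cup e(\mcE))$; this is the usual projection formula for boundary maps in localization sequences, but it deserves to be stated carefully. Everything else — the determinant computation for $\tau$, the identification of the twisted $\SL$-structure, and the reduction $\la -t^{-r}\ra=\la -t\ra$ — is a direct computation from Definition~\ref{def:orientation} and Lemmas~\ref{lem:change_orient}, \ref{lem:quadratic} and~\ref{lem:connecting}.
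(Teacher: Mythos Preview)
Your proof is correct and follows essentially the same route as the paper's: pull back to $X\times\Gm$, use the scaling automorphism on the odd-rank summand to derive $p^*e(\mcE)=\langle -t\rangle\cup p^*e(\mcE)$, and then apply the connecting map together with Lemma~\ref{lem:connecting}. The only cosmetic differences are that the paper orients the $\SL$-isomorphism in the opposite direction (source twisted by $-t^{2n+1}$ rather than target by $-t^{-r}$) and passes through Thom classes before restricting along the zero section; the projection-formula step you flag as the main obstacle is used implicitly in the paper as well.
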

\begin{proof}
	Choose an isomorphism $\theta\colon E_1\oplus E_2 \rightiso  E$. Without loss of generality we may assume $E=E_1\oplus E_2$, so we omit $\theta$ from notation. Let $\pi\colon X\times \Gm\to X$ be the projection and denote $t\in \Gamma(X\times \Gm,\struct^*_{X\times \Gm})$ the invertible function on $X\times \Gm$ given by the coordinate function on $\Gm$. Consider the automorphism $\rho\colon \pi^*E_1\to \pi^* E_1$ induced by the multiplication by $-t$, i.e. $\rho(v)=-tv$ on the sections. Then $\rho$ gives rise to an isomorphism of vector $\SL$-bundles 
	\[
	(\rho\oplus \id) \colon (\pi^*(E_1\oplus E_2),-t^{2n+1}\cdot \pi^*\lambda) \rightiso (\pi^*(E_1\oplus E_2),\pi^*\lambda)
	\]
	with $2n+1=\rank E_1$. Property~\ref{def:orientation_iso} of Definition~\ref{def:orientation} yields
	\[
	\thc(\pi^*(E_1\oplus E_2),-t^{2n+1}\cdot \pi^*\lambda)
	=(\rho\oplus \id)^A\thc (\pi^*(E_1\oplus E_2),\pi^*\lambda).
	\]
	Then it follows from Lemmas~\ref{lem:change_orient} and~\ref{lem:quadratic} that 
	\begin{multline*}
	(\rho\oplus \id)^A\thc (\pi^*(E_1\oplus E_2),\pi^*\lambda) = \langle -t^{2n+1}\rangle \cup \thc (\pi^*(E_1\oplus E_2),\pi^*\lambda) = \\
	= \langle -t\rangle \cup \thc (\pi^*(E_1\oplus E_2),\pi^*\lambda).
	\end{multline*}
	For the morphism $z\colon X\times \Gm\to \Th(\pi^*(E_1\oplus E_2))$ induced by the zero section we have $(\rho\oplus \id)\circ z=z$ whence 
	\begin{multline*}
	\pi^A e(E_1\oplus E_2,\lambda)=e(\pi^*(E_1\oplus E_2),\pi^*\lambda)  =z^A\thc (\pi^*(E_1\oplus E_2),\pi^*\lambda) =\\
	= z^A (\rho\oplus \id)^A \thc (\pi^*(E_1\oplus E_2),\pi^*\lambda)
	= z^A(\langle -t \rangle \cup \thc (\pi^*(E_1\oplus E_2),\pi^*\lambda)) =\\
	= \langle -t \rangle \cup z^A(\thc (\pi^*(E_1\oplus E_2),\pi^*\lambda))= \langle -t \rangle \cup\pi^A e(E_1\oplus E_2,\lambda).	
	\end{multline*}
	
	Let $\partial$ be the connecting homomorphism in the localization long exact sequence
	\[
	\cdots \to A^{*,*}(X_+\wedge \T) \to A^{*,*}(X\times \A^1 ) \to A^{*,*}(X\times \Gm) \xrightarrow{\partial} A^{*+1,*}(X_+\wedge \T)\to\cdots
	\]
	Then for every $\alpha\in A^{*,*}(X)$ one has $\partial (\pi^A\alpha) =0$. Lemma~\ref{lem:connecting} combined with the above yields 
	\begin{multline*}
	0=\partial(\pi^A e(E_1\oplus E_2,\lambda))=\partial ( \langle -t \rangle \cup  \pi^A e (E_1\oplus E_2,\lambda))=\\
	=  \partial(\langle -t \rangle) \cup  e (E_1\oplus E_2,\lambda) = \eta \cup e (E_1\oplus E_2,\lambda). \qedhere
	\end{multline*}
\end{proof}

\begin{remark}
	Theorem~\ref{thm:Euler_torsion} improves \cite[Corollary~2]{An15} where it was proved that for a vector $\SL$-bundle of odd rank one has $e(\mcE)=0$ after inverting $\eta$.
\end{remark}

\begin{definition}[{\cite[Definition 14.1]{PW10a}}] \label{def:Borel}
	Let $A\in\SHk$ be a commutative ring spectrum. A theory of \textit{Borel classes} on $A$ is a rule which assigns to every symplectic bundle $\mcE$ over $X\in\Smk$ a sequence of elements 
	\[
	b_i(\mcE)=b^A_i(\mcE)\in A^{4i,2i}(X),\quad i\ge 1,
	\]
	with the following properties:
	\begin{enumerate}
		\item
		For isomorphic symplectic bundles $\mcE\cong \mcE'$ one has $b_i(\mcE)=b_i(\mcE')$ for all $i$.
		\item
		For a morphism of smooth $S$-schemes $f\colon Y\to X$ and a symplectic bundle $\mcE$ over $X$ one has $f^Ab_i(\mcE)=b_i(f^*\mcE)$ for all $i$.
		\item
		For $X\in\Smk$ the homomorphism 
		\[
		A^{*,*}(X)\oplus A^{*-4,*-2}(X) \to A^{*,*}(\HProj^1\times_S X)
		\]
		given by $a+a'\mapsto \pi_2^A(a)+\pi_2^A(a')\cup \pi_1^A(b_1(\mathcal{T}))$ is an isomorphism. Here 
		\begin{itemize}
			\item 
			$\HProj^1=\Sp_4/(\Sp_2\times \Sp_2)$ is the quaternionic projective line that is the variety of nondegenerate symplectic planes in the $4$-dimensional symplectic space $\hyperb(\triv^{\oplus 2}_S)$.
			\item 
			$\mathcal{T}$ is the tautological rank $2$ symplectic bundle over $\HProj^1$.
			\item
			$\pi_1\colon \HProj^1\times_S X\to \HProj^1,\, \pi_2\colon \HProj^1\times_S X\to X$ are the projections.			
		\end{itemize}
		\item
		For the trivialized rank $2$ symplectic bundle $\hyperb(\triv_S)=\triv^{\Sp,2}_S$ over $S$ one has
		$b_1(\hyperb(\triv_S))=0\in A^{4,2}(S)$.
		\item
		For a rank $2n$ symplectic bundle $\mcE$ one has $b_i(\mcE)=0$ for $i>n$.
		\item
		For symplectic bundles $\mcE,\mcE'$ over $X$ one has $b_t(\mcE)b_t(\mcE')=b_t(\mcE\oplus \mcE')$,
		where 
		\[
		b_t(\mcE)=1+b_1(\mcE)t+b_2(\mcE)t^2+\dots\in A^{*,*}(X)[t].
		\]
	\end{enumerate}
	We refer to $b_i(E)$ as \textit{Borel classes} of $E$ and $b_t(E)$ is the \textit{total Borel class}. 
	
	It follows from \cite[Theorem~14.4]{PW10a} that a symplectically oriented spectrum $A\in\SHk$ admits a unique theory of Borel classes such that $b_{n}(\mcE)=e(\mcE)$ for every rank $2n$ symplectic bundle $\mcE$ over $X\in\Smk$. Moreover, Lemma~\ref{lem:orientationshomo} yields that an $\SL$-oriented spectrum is symplectically oriented in a canonical way. Thus every $\SL$-oriented spectrum admits a canonical theory of Borel classes such that for every rank $2n$ symplectic bundle $\mcE$ one has
	\[
	b_{n}(\mcE)=e(\Phi(\mcE))
	\]
	with $\Phi(\mcE)$ being the associated rank $2n$ vector $\SL$-bundle (see Definition~\ref{def:SLbundle}).
\end{definition}

\begin{definition}[{cf. \cite[Definition~7]{An15} and \cite[Definition~5.6]{HW17}}] \label{def:Pontryagin}
	Let $A\in\SHk$ be a commutative ring spectrum with a chosen Borel classes theory (e.g. a symplectically or $\SL$-oriented spectrum). For a rank $n$ vector bundle $E$ over $X\in\Smk$ put
	\[
	p_i(E)=(-1)^i b_{2i} (\hyperb(E)) \in A^{8i,4i}(X),\quad i= 1,2,\ldots,
	\]
	for the associated rank $2n$ hyperbolic symplectic bundle $\hyperb(E)$. We refer to $p_i(E)$ as \textit{Pontryagin classes} of $E$ and 
	\[
	p_t(E)=1+p_1(E)t+p_2(E)t^2+\dots\in A^{*,*}(X)[t]
	\]
	is the \textit{total Pontryagin class}.
\end{definition}

\begin{lemma} \label{lem:torsionBorel}
	Let $A\in \SHk$ be an $SL$-oriented spectrum and $E$ be a rank $2n+1$ vector bundle over $X\in \Smk$. Then $\eta \cup b_{2n+1}(\hyperb(E))=0$.
\end{lemma}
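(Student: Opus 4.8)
The plan is to deduce this from Theorem~\ref{thm:Euler_torsion} by exhibiting the symplectic Euler class $b_{2n+1}(\hyperb(E))$ as the Euler class of a vector $\SL$-bundle of odd rank that splits off an odd rank direct summand. Recall from Definition~\ref{def:Borel} that for the rank $2(2n+1)$ symplectic bundle $\hyperb(E)$ one has $b_{2n+1}(\hyperb(E))=e(\Phi(\hyperb(E)))$, where $\Phi(\hyperb(E))$ is the associated vector $\SL$-bundle. By Definition~\ref{def:Pfaffian} the underlying vector bundle of $\Phi(\hyperb(E))$ is $E\oplus E^\vee$, which has rank $2(2n+1)=4n+2$. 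So I would apply Theorem~\ref{thm:Euler_torsion} to the vector $\SL$-bundle $\mcE=\Phi(\hyperb(E))=(E\oplus E^\vee,\lambda_\phi)$ with the direct sum decomposition of the underlying bundle $E\oplus E^\vee\cong E_1\oplus E_2$ given by $E_1=E$ and $E_2=E^\vee$: here $E_1=E$ has odd rank $2n+1$. Theorem~\ref{thm:Euler_torsion} then gives $\eta\cup e(\Phi(\hyperb(E)))=0$, which is exactly $\eta\cup b_{2n+1}(\hyperb(E))=0$.

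First I would spell out, via Definition~\ref{def:Borel}, that the canonical theory of Borel classes on an $\SL$-oriented spectrum satisfies $b_{m}(\mcF)=e(\Phi(\mcF))$ for a rank $2m$ symplectic bundle $\mcF$; applying this with $\mcF=\hyperb(E)$ and $m=2n+1$ identifies the class in question with an Euler class. Second I would observe that the underlying vector bundle of $\Phi(\hyperb(E))$ is $E\oplus E^\vee$ (Definition~\ref{def:Pfaffian} and Definition~\ref{def:hyperb}), so the hypotheses of Theorem~\ref{thm:Euler_torsion} are met with $E_1=E$ of odd rank $2n+1$ and $E_2=E^\vee$. Third, invoking Theorem~\ref{thm:Euler_torsion} concludes the proof.

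The only point requiring a little care is that Theorem~\ref{thm:Euler_torsion} is a statement about the Euler class of a vector $\SL$-bundle whose underlying vector bundle admits an odd rank direct summand; it does not matter what the $\SL$-structure $\lambda_\phi$ on $E\oplus E^\vee$ is, nor whether the summands $E$, $E^\vee$ carry compatible $\SL$-structures, since the hypothesis of Theorem~\ref{thm:Euler_torsion} only concerns the underlying vector bundles. So no compatibility between the Pfaffian trivialization $\lambda_\phi$ and the splitting is needed. I do not anticipate a genuine obstacle here; the lemma is essentially a direct corollary of Theorem~\ref{thm:Euler_torsion} once the symplectic Euler class $b_{2n+1}(\hyperb(E))$ is recognized, through the canonical theory of Borel classes, as the $\SL$-theoretic Euler class of the vector bundle $E\oplus E^\vee$.
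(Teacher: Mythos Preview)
Your proposal is correct and follows essentially the same approach as the paper's proof: identify $b_{2n+1}(\hyperb(E))$ with $e(\Phi(\hyperb(E)))=e(E\oplus E^\vee,\lambda)$ via Definition~\ref{def:Borel}, then apply Theorem~\ref{thm:Euler_torsion} using the splitting $E\oplus E^\vee$ with $E$ of odd rank $2n+1$. (One wording slip: in your first sentence you call the $\SL$-bundle ``of odd rank'', but as you correctly note afterwards it has rank $4n+2$; only the summand $E$ is of odd rank.)
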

\begin{proof}
	Let $\Psi(\hyperb(E))=(E\oplus E^\vee,\lambda)$ be the rank $4n+2$ vector $\SL$-bundle associated with $\hyperb(E)$ (see Definition~\ref{def:SLbundle}). Then one has
	\[
	b_{2n+1}(\hyperb(E))=e(E\oplus E^\vee,\lambda).
	\]
	It follows from Theorem~\ref{thm:Euler_torsion} that $\eta \cup e(E\oplus E^\vee,\lambda)=0$. 
\end{proof}

\begin{corollary} \label{cor:Pontryagin}
	Suppose that $S=\Spec k$ for a field $k$ of characteristic different from $2$. Let $A\in \SH(k)$ be an $SL$-oriented spectrum. Then the following holds after inverting $\eta$ in the coefficients, i.e. in $A^{*,*}(X)[\eta^{-1}]$.
	\begin{enumerate}
		\item 
		$b_{2i+1}(\hyperb(E))=0, i\ge 0,$ for a vector bundle $E$ over $X\in\Sm_k$.
		\item 
		$p_t(E\oplus E')=p_t(E)p_t(E')$ for vector bundles $E,E'$ over $X\in\Sm_k$.
		\item 
		$p_i(E)=0$ for a rank $n$ vector bundle $E$ over $X\in\Sm_k$ and $i>\tfrac{n}{2}$.
	\end{enumerate}
\end{corollary}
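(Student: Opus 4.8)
The plan is to deduce all three statements from Lemma~\ref{lem:torsionBorel} together with the splitting principle for the canonical Borel class theory on an $\SL$-oriented spectrum. Throughout we work in $A^{*,*}(X)[\eta^{-1}]$, so that $\eta$ is a unit and hence Lemma~\ref{lem:torsionBorel} reads $b_{2n+1}(\hyperb(E))=0$ for every odd rank vector bundle~$E$ --- this is exactly statement~(1). For the remaining parts I would first record the basic compatibility $\hyperb(E)\cong\hyperb(E^\vee)$ as symplectic bundles (via the evaluation pairing), so that $b_i(\hyperb(E))=b_i(\hyperb(E^\vee))$; this will be needed to identify Pontryagin classes of a bundle and its dual, though it is not strictly necessary for the main computation.

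For part~(2) the key identity is the multiplicativity of the total Borel class, property~(6) of Definition~\ref{def:Borel}, combined with the additivity of hyperbolization, Lemma~\ref{lem:pfaf}(1): for vector bundles $E,E'$ over $X$ one has $\hyperb(E\oplus E')\cong\hyperb(E)\oplus\hyperb(E')$, hence
\[
b_t(\hyperb(E\oplus E'))=b_t(\hyperb(E))\cdot b_t(\hyperb(E'))\in A^{*,*}(X)[\eta^{-1}][t].
\]
Now by part~(1) the odd-degree Borel classes of any hyperbolic bundle vanish after inverting $\eta$, so $b_t(\hyperb(E))$ only involves even powers of $t$; writing $b_t(\hyperb(E))=\sum_{j\ge 0}b_{2j}(\hyperb(E))t^{2j}$ and recalling the definition $p_i(E)=(-1)^i b_{2i}(\hyperb(E))$, substitution $t^2\mapsto -s$ turns the above displayed identity into $p_s(E\oplus E')=p_s(E)p_s(E')$, which is~(2) after renaming the formal variable. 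The only subtlety is keeping track of the sign $(-1)^i$ and the reindexing $t^2=-s$; this is a routine bookkeeping exercise once one knows the odd classes vanish.

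For part~(3) I would use the vanishing property~(5) of Borel classes: for a rank $2m$ symplectic bundle $\mcF$ one has $b_j(\mcF)=0$ for $j>m$. Applying this to $\mcF=\hyperb(E)$, which has rank $2n$ when $E$ has rank $n$, gives $b_j(\hyperb(E))=0$ for $j>n$; in particular $b_{2i}(\hyperb(E))=0$ whenever $2i>n$, i.e. $i>\tfrac n2$, and hence $p_i(E)=(-1)^i b_{2i}(\hyperb(E))=0$ for $i>\tfrac n2$. This part in fact holds before inverting $\eta$ and uses none of the torsion results. The main obstacle in the whole argument is genuinely part~(2): it rests entirely on part~(1), so the real content is Lemma~\ref{lem:torsionBorel} (equivalently Theorem~\ref{thm:Euler_torsion}) applied to the odd-rank bundle $E$ underlying $\hyperb(E)$ --- once the odd Borel classes are killed, the multiplicativity of the total Pontryagin class is a formal consequence of the multiplicativity of the total Borel class. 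No splitting principle or characteristic-$\ne 2$ input is needed beyond what is already packaged into Definition~\ref{def:Borel}; the hypothesis $\chark k\ne 2$ presumably enters only to guarantee the existence of the symplectic orientation and Borel classes in the first place, via Lemma~\ref{lem:orientationshomo} and \cite{PW10a}.
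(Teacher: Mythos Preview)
Your argument for (1) has a genuine gap. Lemma~\ref{lem:torsionBorel} only asserts that for a bundle $E$ of \emph{odd} rank $2n+1$ the single class $b_{2n+1}(\hyperb(E))$ is $\eta$-torsion; it says nothing about the lower odd Borel classes $b_1,b_3,\dots,b_{2n-1}$, and nothing at all when $E$ has even rank. Statement~(1) requires \emph{all} odd Borel classes of $\hyperb(E)$ to vanish for $E$ of arbitrary rank. Since your proof of (2) rests entirely on (1), that part is unproved as well. (Note also that no splitting principle into line bundles is available here: the theory is only $\SL$-oriented, not $\GL$-oriented, so you cannot reduce to $\hyperb(L)$'s.)

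The paper closes this gap by a reduction to the case of trivial determinant: Lemma~\ref{lem:torsionBorel} applied to the line bundle $(\det E)^\vee$ gives $b_t(\hyperb((\det E)^\vee))=1$, whence multiplicativity yields $b_t(\hyperb(E))=b_t(\hyperb(E\oplus(\det E)^\vee))$; since $E\oplus(\det E)^\vee$ has trivial determinant, the vanishing of its odd Borel classes is already known from \cite[Corollary~3]{An15}. This is where the hypothesis $\chark k\neq 2$ actually enters --- it is a standing assumption in \cite{An15}.

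Your argument for (3), on the other hand, is correct and in fact cleaner than the paper's: property~(5) of Definition~\ref{def:Borel} applied directly to the rank $2n$ symplectic bundle $\hyperb(E)$ gives $b_{2i}(\hyperb(E))=0$ for $2i>n$, with no need to invert $\eta$ or invoke \cite{An15}. The paper instead routes through $E\oplus(\det E)^\vee$ and handles the boundary case $n=2m+1$, $i=m+1$ separately via Theorem~\ref{thm:Euler_torsion}; your direct approach avoids this detour.
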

\begin{proof}
	We work with $\eta$-inverted coefficients, i.e. in $A^{*,*}(X)[\eta^{-1}]$.
	
	(1) It follows from Lemma~\ref{lem:torsionBorel} that 
	\[
	b_t(\hyperb((\det E)^\vee))=1+b_1(\hyperb((\det E)^\vee))t=1.
	\]
	Then the multiplicativity property of the total Borel class yields
	\[
	b_t(\hyperb(E\oplus (\det E)^\vee))=b_t(\hyperb(E))b_t(\hyperb((\det E)^\vee))=b_t(\hyperb(E)).
	\]
	The equality $b_{2i+1}(\hyperb(E\oplus (\det E)^\vee))=0$ follows from \cite[Corollary~3]{An15} whence the claim.
	
	(2) It follows from the above that
	\[
	p_{-t^2}(E)=b_t(\hyperb(E)),\, p_{-t^2}(E')=b_t(\hyperb(E')),\, p_{-t^2}(E\oplus E')=b_t(\hyperb(E\oplus E')).
	\]
	The claim follows from the multiplicativity property of the total Borel class.
	
	(3) We have $p_t((\det E)^\vee)=1$. It follows from the above that
	\[
	p_t(E)=p_t(E)p_t((\det E)^\vee)=p_t(E\oplus (\det E)^\vee).
	\]
	\cite[Corollary~3]{An15} yields $p_i(E\oplus (\det E)^\vee)=0$ for $i>\tfrac{n+1}{2}$ whence $p_i(E)=0$ for $i>\tfrac{n+1}{2}$. The only remaining case is $n=2m+1$, $i=m+1$. In this case we have
	\[
	p_{m+1}(E)=p_{m+1}(E\oplus (\det E)^\vee)=e(E\oplus (\det E)^\vee,\lambda)^2
	\]
	where $\lambda\colon \det (E\oplus (\det E)^\vee)\rightiso \triv_X$ is the canonical isomorphism and the last equality is given by \cite[Corollary~3]{An15}. Theorem~\ref{thm:Euler_torsion} yields $e(E\oplus (\det E)^\vee,\lambda)=0$ whence the claim.
\end{proof}

\begin{remark}
	The assumption $S=\Spec k$ with $k$ being a field of characteristic different from $2$ arises from the same assumption of \cite{An15}. It seems that this assumption is redundant and all the results of the loc. cit. as well as the above corollary should hold over a general quasi-compact quasi-separated scheme. Moreover, almost all the reasoning given in \cite{An15} works over a general base.
\end{remark}

\end{document}